\documentclass{article}[12pt]
\usepackage{amsmath,amsfonts,amsthm}
\usepackage{amssymb}

\usepackage{epsfig}

\usepackage{subfigure}
\usepackage{graphics}
\usepackage{graphicx}
\usepackage{color}
\usepackage{soul}
\usepackage[makeroom]{cancel}

\usepackage{latexsym}
\usepackage{graphics}
\usepackage{amsmath}
\usepackage{amsthm}
\usepackage{xspace}
\usepackage{amssymb}
\usepackage{color}
\usepackage{cite}
\usepackage{latexsym}
\usepackage{graphics}
\usepackage{amsmath}
\usepackage{amsthm}
\usepackage{xspace}
\usepackage{amssymb}
\usepackage{epsfig}
\usepackage{comment}

\DeclareMathOperator*{\argmax}{arg\,max}

\newcommand{\beql}[1]{\begin{equation}\label{#1}}
\newcommand{\eeql}{\end{equation}}
\newcommand{\eqn}[1]{(\ref{#1})}

\newcommand{\pr}{\mathbb{P}}

\newcommand{\Q}{\mathbb{Q}}

\newcommand{\ch}{{\cal H}}

\newcommand{\cl}{{\cal L}}

\newcommand{\Z}{\mathbb{Z}}

\newcommand{\edit}[1]{#1}

\newtheorem{thm}{Theorem}
\newtheorem{lem}[thm]{Lemma}
\newtheorem{prop}[thm]{Proposition}

\newtheorem{conjecture}[thm]{Conjecture}

\theoremstyle{remark}

\begin{document}

\title{Multi-floor generalization of TASEP 
}

\author
{
Yuliy Baryshnikov \\
Math Department and\\
The Grainger College of Engineering\\
ECE Department
and Coordinated Science Lab\\
University of Illinois at Urbana-Champaign\\
Urbana, IL 61801\\
\texttt{ymb@illinois.edu}\\
\and
Alexander L. Stolyar \\
The Grainger College of Engineering\\
ISE Department
and Coordinated Science Lab\\
University of Illinois at Urbana-Champaign\\
Urbana, IL 61801\\
\texttt{stolyar@illinois.edu}
}

\date{\today}

\maketitle

\begin{abstract}

We consider an interacting particle system, which generalizes the classical totally asymmetric simple exclusion process (TASEP), in that each site can contain up to a fixed finite number of particles, and the particle movement is governed by a {\em back-pressure} (BP) algorithm (also often called {\em MaxWeight}). There are $N$ sites (with $N$ finite or infinite), each may contain at most $c$ particles, $1 \le c < \infty$. New particles enter the system at the left-most site $1$ as a Poisson process of rate $\alpha\le 1$, unless site $1$ has $c$ particles. Particles (if any) are removed from the right-most site $N$ as a Poisson process of rate $\beta \le 1$. The left-to-right movement of particles between neighboring sites is governed by the BP rule: one particle moves from site $n$ to $n+1$ at epochs of a rate $1$ Poisson process, as long as the former site has strictly more particles than the latter. When $c=1$, this is the standard TASEP.

Our main results address the asymptotics of the stationary distribution of a finite system, and especially the limit of the flux (current) as $N\to\infty$. In particular, we prove that interesting non-trivial phase transitions take place in a system with $c>1$.
For example, if $c>1$ and $1/2 \le \beta \le 1$, the maximum limiting flux $1/4$ is achieved as long as $\alpha \ge \alpha_c^*$, where $\alpha_c^* < 1/2$
is some non-trivial threshold. (For the standard TASEP the threshold is $1/2$.) We also put forward a general conjecture about the stationary distribution asymptotics under an arbitrary parameter setting.
We illustrate our formal results and the conjecture by simulations, and identify interesting directions for further research.

\end{abstract}

{\bf Keywords:} Interacting particle systems, TASEP, Multi-floor generalization, Flux, Current, Queueing networks, Back-pressure, MaxWeight, Blocking, Tandem queues

{\bf AMS Subject Classification:} 90B15, 60K25

\section{Introduction}
\label{sec-model-general}

\subsection{Model and motivations}
\label{sec-model-motivations}

We consider an interacting particle system, which generalizes the classical totally asymmetric simple exclusion process (TASEP) \cite{Blythe_2007, Liggett-1975, Liggett-book}. In TASEP each site contains either $0$ or $1$ particles. In our system each site can contain up to a fixed finite number of particles, and the particle movement is governed by a {\em back-pressure} (BP) algorithm \cite{TE92} (also often called {\em MaxWeight}). 

Specifically, the model is as follows. 
A system can be either finite, consisting of $N$ sites labeled by $n\in D_N=\{1,2,\ldots,N\}$, or one-sided, 
with the infinite number of sites, labeled by $n \in D_\infty = \{1,2,\ldots\}$. 
The number of particles (``queue length'') $Q_n(t)$ at site $n$ at time $t\ge 0$  is upper bounded (for all $n$ and $t$)  by 
a fixed finite integer $c\ge 1$. The system operates in continuous time. There is a Poisson process of rate $\alpha \in (0,1]$ of ``arrival bell rings;''
if this bell rings at time $t$ one additional particle is added to (arrives at) the left-most site 1, as long as $Q_1(t-)<c$, and is ``blocked'' otherwise.
There is a Poisson process of rate $1$ of ``bell rings'' for each pair of neighboring sites $(n,n+1)$;
if the bell rings at time $t$ one particle moves form site $n$ to site $n+1$, as long as $Q_n(t-) > Q_{n+1}(t-)$.
In a finite system there is also a Poisson process of rate $\beta \in (0,1]$ of ``departure bell rings;''
if this bell rings at time $t$ one particle is removed (departs the system) from the right-most site $N$, as long as $Q_N(t-)>0$.
All these driving Poisson processes are independent. In the special case of $c=1$, the process is the classical, well-studied TASEP. 
(We could allow cases $\alpha>1$ and/or $\beta>1$ as well -- the results will be essentially same as for $\alpha=1$ and/or $\beta=1$, respectively.
We will not treat such cases explicitly to avoid clogging the exposition.)

It is natural, and useful, to visualize each site as having $c$ ``floors.'' If site $n$ has $k \in \{0, \ldots,c\}$ particles, i.e. $Q_n=k$,
then the particles ``occupy'' floors $1,\ldots, k$ (no floors when $k=0$), one per a floor, and floors $k+1, \ldots,c$ are ``vacant.''
The ``single-floor'' process, with $c=1$, is the classical TASEP.

The BP (MaxWeight) algorithm for a queueing network control, originally introduced in \cite{TE92}, is well-known to have a variety of ``nice'' properties, including ensuring network stochastic stability (if such is feasible at all). One of the important considerations is how a network performance scales under BP, when the network becomes large. Paper \cite{St2011-tandem-queues} addresses the queue scaling under BP for the simple network consisting of the large number $N$ of queues in tandem; specifically, the model in \cite{St2011-tandem-queues} is just like in this paper, with $\beta=1$, but with $c=\infty$, i.e. there is no ``blocking'' of arriving particles at site $1$ and no limit on the queue lengths. 
It is shown in \cite{St2011-tandem-queues} that the following phase transition takes place: if $\alpha>1/4$, the steady-state queues grow to infinity as $N\to\infty$; while if $\alpha < 1/4$, the queues stay $O(1)$. The threshold $1/4$, at which the phase transition occurs, is closely related to the property that $1/4$ is the maximum flux (current) of the one-sided single-floor TASEP. In fact, as shown in \cite{St2011-tandem-queues}, when $\alpha<1/4$,
the process ``lives on the first floor at the sites far on the right,'' and therefore the single-floor TASEP maximum flux $1/4$ is sufficient to handle the exogenous arrival rate $\alpha$.

One motivation for the present work is to consider scaling properties of BP in a network with blocking, i.e. with some limit $c<\infty$ on the maximum queue length. In this case, of course, the queue lengths are always bounded by $c$, but, as $N\to\infty$, there are interesting questions of the limiting system flux (current, throughput) and of the limiting spatial pattern of the queues. These questions have well-known answers for the single-floor TASEP 
(summarized in, e.g., \cite[Table 1]{Blythe_2007}), 
but appear very non-trivial for a multi-floor TASEP that we introduce in this paper. 

Another way to look at our model is to view it as a certain kind of a {\em multi-type} (or, {\em multi-species}) TASEP (see, e.g., \cite{ferraristationary2007} and references therein), which is, however, distinct from the models studied in previous work.\footnote{We remark that our multi-floor model has {\em no} relation
to the {\em multi-line representation} of stationary distributions of some multi-type TASEP models, as in \cite{ferraristationary2007}.}
A site with $i$ particles in it can be viewed 
as occupied by a ``particle'' of type $i \in \{0,1,\ldots,c\}$. When the bell rings for the site pair $(n,n+1)$, 
and the ``particle'' on the left is of strictly higher type, the ``particles'' change their types.
For example, in the 2-floor model ($c=2$), for any site pair away from the boundaries, 
the (swapping) transitions $21 \to 12$, $10 \to 01$, as well as transition $20 \to 11$, are allowed.
This has a rather appealing interpretation of our system in terms of blocks of sites (``zones'') with ``collisions'' of ``particles'' of different types at zone boundaries (fronts). Still using $c=2$ as an example, there will be the (``second-floor'') zone on the left with ``particles'' of types 2 and 1 only,
and with the behavior being exactly that of standard single-floor TASEP, with '2's and '1's moving to the right and left, respectively,
and playing role of particles and holes, respectively. There will be the another (``first-floor'') zone on the right with ``particles'' of types 1 and 0 only,
with the behavior being exactly that of standard single-floor TASEP, with '1's (particles) and '0's (holes) moving to the right and left, respectively.
At the boundary between left and right zones, '2's moving right and '0's moving left ``collide'' and both turn into '1's -- this is an interpretation 
of the transition $20 \to 11$. (One may think of '2's and '0' as being, respectively, positively and negatively charged,
and '1's being ``neutral''. Then transition $20 \to 11$ is interpreted as neutralization of particles of opposite charges upon collision. If we further interpret 
'1's as ``holes,'' then a neutralization is an ``annihilation'' of ``particles" of opposite charges upon collision; see Figure~\ref{fig-annih}.)
For a general $c\ge 2$, going from left to right, there will be a zone on each floor $c, \ldots,1$.  (Each of these zones may be collapsed to zero length.) 
Within the floor-$m$ zone (away from its boundaries) only the swapping transitions $m,m-1 \to m-1,m$ occur.
Conversions $m,m-2 \to m-1,m-1$ occur at the boundary between floor-$m$ and floor-$(m-1)$ zones, due to ``collisions'' of '$m$'s moving to the right with 
'$(m-2)$'s moving to the left.

      \begin{figure}
\centering
        \includegraphics[width=5in]{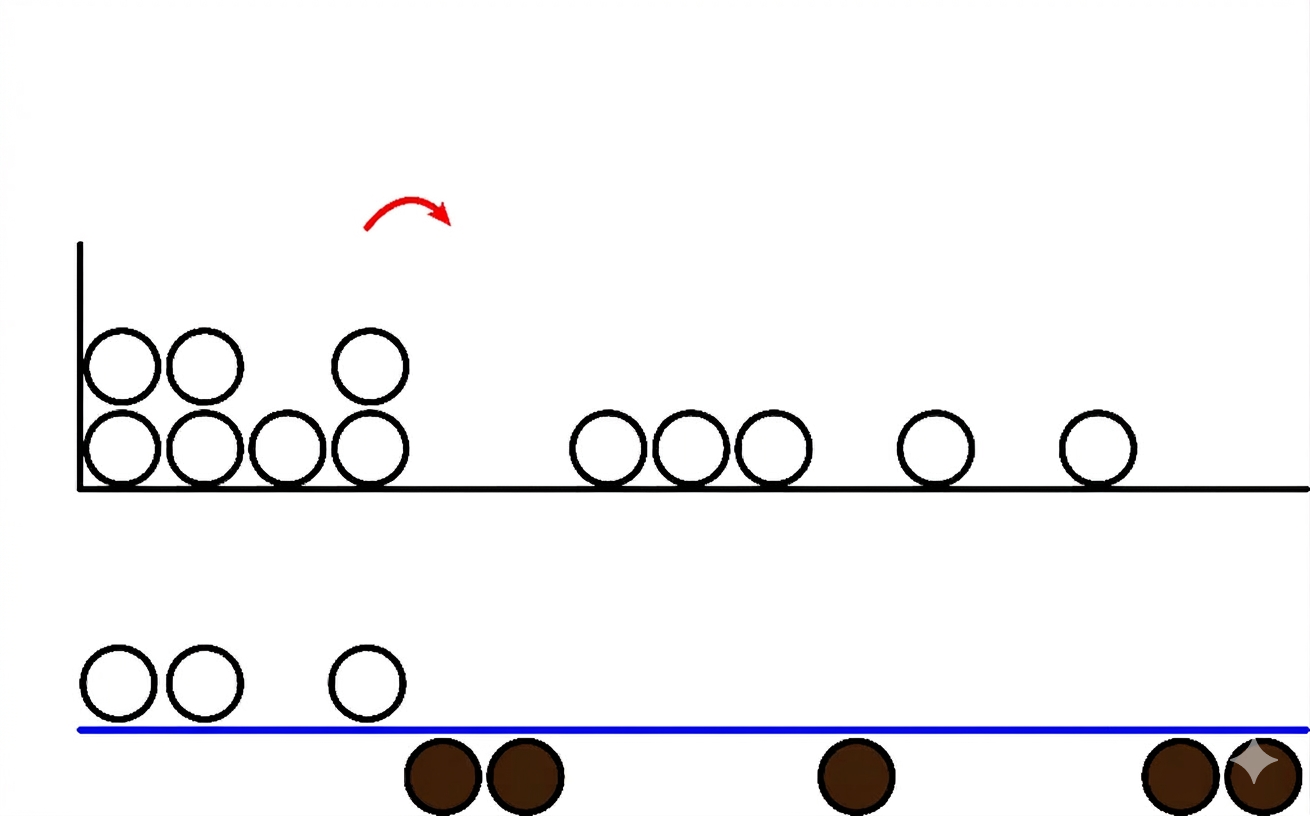}
    \caption{Top: 2-floor TASEP, with all particles moving to the right. Bottom: Interpretation in terms of positive ``particles'' moving to the right and negative "particles" moving to the left. The move shown on the top picture corresponds to the annihilation of the two colliding ``particles'' on the bottom picture.}
    \label{fig-annih}
\end{figure}

\edit{
Yet another related class of models are {\em multi-lane} generalizations of TASEP (cf. \cite{CEKT16} and references therein). In such models, roughly speaking, there are multiple ``lanes,'' spanning a common set of sites. Within each lane particles move as in the
classical single-lane (single-floor) TASEP, but in addition a particle in lane $m$ at the rate $d_{m,\ell}$ hops (migrates)
 to a different, vacant lane $\ell$ within same site. Each lane $m$ has its own attempted arrival and departure rates, $\alpha_m$ and $\beta_m$,
  at the left and right boundaries, respectively. (The model in the present paper is not within this framework.)
 In particular, \cite{CEKT16} studies a mean-field based hydrodynamic model and its stationary solutions, under additional assumptions on the 
 arrival and departure rates.
 Paper \cite{SJHW11} considers a strongly asymmetric two-lane system, in which only the migration 
 from ``higher'' to ``lower'' lane is allowed. 
  The paper uses mean-field approximation to study the system steady-state and
 obtain the approximate phase diagram. 
 For the model in the present paper such mean-field approximation would not lead to any non-trivial results, even approximate. 
 We will elaborate on the comparison of our results to those in \cite{CEKT16,SJHW11} in Section~\ref{sec-discussion}.
 To summarize: the present paper studies a different model from those in multi-lane 
 literature, and it obtains rigorous results for the exact microscopic model; the methods in previous work do not appear to lead to 
 non-trivial conclusions for our model.
 }

\subsection{Setup}

We proceed with necessary definitions and notation.
We can and will adopt a convention that in a finite system $Q_n(t) \equiv 0$ for $n>N$ at all times.
Then, for finite and one-sided systems, $Q(t)=(Q_n(t), ~n\in D_\infty)$, is a Markov process. 
A process state is a sequence 
$Q=(Q_n, ~n\in D_\infty)$, with  $Q_n \in \{0,1,\ldots,c\}$. The order relation $Q \le Q'$ between states means $Q_n \le Q'_n, \forall n$.
The process $Q(t)$ possesses the basic monotonicity property (see \cite[lemma 3]{St2011-tandem-queues}): if we consider two versions of the process, with initial states $Q(0) \le Q'(0)$, they can be coupled so that  $Q(t) \le Q'(t)$ holds at all times $t\ge 0$. 

Obviously, a finite system has finite state space and then the unique stationary distribution. 
The one-sided system may have multiple stationary distributions; however, one of them -- the {\em lower invariant measure} (LIM) -- is unique,
which follows from the above stated basic monotonicity. Specifically, the LIM is the stationary distribution obtained as the $t\to\infty$ limit of distributions of $Q(t)$, given the system starts from ``empty'' state (with all $Q_n=0$), in which case $Q(t)$ is stochastically monotone 
non-decreasing (w.r.t. $\le$ order on the state space). 

It is instructive to introduce non-increasing majorant and minorant, $Q^+_n$ and $Q^-_n$ of a process state $Q$: 
$$ 
Q^-_n=\min_{k\leq n} Q_k;  ~~ Q^+_n=\max_{k\geq n} Q_k.
$$ 
It is easy to observe (see \cite[eq. (2) and lemma 3]{St2011-tandem-queues}) that, under the BP rule, if condition
\beql{eq-queue-cond}
Q^-_n(t) \ge Q^+_{n+1}(t)-1, ~\forall n,
\eeql
holds for some time $t$, then it holds for all times thereafter. Since we will be primarily interested in the LIM of the process, WLOG we can restrict the state space to the states reachable from the empty state (all $Q_n=0$) -- and such states must satisfy 
\eqn{eq-queue-cond}. Therefore, we will consider the process in the state space
$$
\Q^c = \{Q=(Q_n, ~n\in D_\infty)~~|~~Q_n \in \{0,1,\ldots,c\}, ~ Q^-_n \ge Q^+_{n+1}-1, ~\forall n\},
$$
with the topology of coordinate-wise convergence. It will be convenient to consider each $\Q^c$ as a subspace of the common space $\Q = \cup_{c\ge 1} \Q^c$. 

\edit{We will say that a distribution $\ch$ on $\Q$ is {\em dominated} by a distribution $\tilde \ch$, if random elements $Q$ and $\tilde Q$ with these distributions
can be coupled so that $Q \le \tilde Q$. 
Following  \cite{Liggett-1975} we denote by $\nu_\gamma$, $\gamma \in [0,1]$, the i.i.d. Bernoulli (with parameter $\gamma$) distribution on $\Q^1$, or on $\Q$, and say that a distribution on $\Q$ {\em behaves like $\nu_\gamma$ at $\infty$} if the sequence of shifted to the left by $\ell \ge 1$ versions of the distribution weakly converges to $\nu_\gamma$ as $\ell\to\infty$.}

The main question that we address in this paper is the limit of the stationary distribution of a finite system, and especially the limit of the flux (current), as $N\to\infty$. There are different ways to center site locations, as we consider $N\to\infty$ limit. The ``left-side'' centering is when, as usual, we consider $D_N=\{1,\ldots,N\}$ as a subset of 
$D_\infty = \{1,2, \ldots\}$; in other words, the sites retain their left-to-right labeling as $N$ increases. The ``right-side'' centering is when, for each $N$, we relabel sites from right to left, namely site $n$ gets new label $N-n+1$, and consider new labels as a subset of $D_\infty$; under the right-side centering, the holes move in the direction of increasing (new) labels, while particles move in the opposite direction. We will use terms left-side and right-side asymptotics, to refer to the corresponding centering. Of course, both asymptotics yield the same limit of the flux.

\subsection{Overview of results and paper structure}

Our results in Section~\ref{sec-model-one-side} address the LIM of the one-sided system. This LIM is easily seen, 
from basic monotonicity, to be the limit, as $N\to\infty$, of the stationary distribution of the finite system with $N$ sites and $\beta=1$;
\edit{the proof is same as that of \cite[proposition 5(iii)]{St2011-tandem-queues}.}
(In particular, the limit of the steady-state flux of the finite system, as $N\to\infty$, is equal to the flux of the one-sided system under LIM.)
This is a reason for considering the one-sided system, in addition to it being of independent interest.
Main results of Section~\ref{sec-model-one-side} (Theorem~\ref{th-main-res} and Lemma~\ref{lem-phi-props}), in particular, prove that the flux under the LIM in one-sided system, as a function of $\alpha$, has an interesting non-trivial phase transition. For any $c\ge 1$, as $\alpha$ is increasing, the flux increases continuously and attains the maximum possible value $1/4$ at some threshold $\alpha_c^*$. It is well known that for the classical single-floor TASEP $\alpha_1^*=1/2$. We show that for any $c>1$ 
the threshold $\alpha_c^*< 1/2$, the sequence of $\alpha_c^*$ is strictly decreasing, and $\alpha_c^* \downarrow 1/4$ as $c \uparrow \infty$. For a given $c>1$: as $\alpha$ increases from $\alpha_c^*$ 
to $\alpha_{c-1}^*$ the flux stays at $1/4$ and the LIM, while stochastically increasing, remains such that w.p.1 the ``right tail'' of the system state is ``on the first floor'' -- namely $Q_n \le 1$ for all large $n$; when $\alpha \ge \alpha_{c-1}^*$, the LIM is the same as for $c-1$, but it is ``shifted up by 1.''

In Subsection~\ref{sec-coupling-contr} we introduce several coupling constructions for our multi-floor model. 
Although they are very natural, some of them (and especially their use in the proof of Theorem~\ref{th-main-res}) 
involve useful conventions and subtleties, which appear to be new and may be of independent interest.

In Section~\ref{sec-left-right} we prove (Theorem~\ref{th-special-max-flux}) 
both the left- and right-side stationary distribution limits 
in the case when $1/2 \le \beta \le 1$ and the maximum possible limiting flux $1/4$ is achieved.
In particular, if $\alpha \ge \alpha^*_c$,
the left-side asymptotic limit for any $\beta\ge 1/2$ is exactly same as for $\beta=1$,
i.e. equal to the LIM of the one-sided system.

Then, in Section~\ref{sec-simulations} we present simulations illustrating our results, and providing intuitions for 
the multi-floor system behavior under general parameter settings. Using the ``language'' developed in our formal results, 
in Section~\ref{sec-obbc} 
we formulate our general Conjecture~\ref{conj-obbc}
and show that all our simulation results conform with this conjecture. 

Finally, in Section~\ref{sec-discussion} \edit{we discuss in more detail the comparison of our results to those in some of the previous work, and}
point out several interesting future research directions.
In particular, there is an interesting question of the behavior of fronts (boundaries) between ``zones,'' where the process ``lives'' on different floors.

\section{One-sided model}
\label{sec-model-one-side}

\subsection{Definitions, basic properties and main result}
\label{sec-main-res}

Consider the one-sided system. 
Recall that the LIM of this system is the $N\to\infty$ limit of stationary distributions of finite systems with $\beta=1$, under left-side asymptotics.

Denote by $\cl(\alpha,c)$
the LIM for parameters $\alpha$ and $c$. (The process may have other stationary distributions as well.)  
We denote by $Q(\infty)$ the random system state in a specified stationary regime (i.e., the distribution of $Q(\infty)$ is a specified stationary distribution). 
When the process is in a stationary regime, the corresponding (steady-state) flux $\phi$ is the average rate at which particles move into site $n$, exogenously if $n=1$ or from $n-1$ if $n\ge 2$; clearly, $\phi = \alpha \pr\{Q_1(\infty)<c\} = \pr\{Q_n(\infty)> Q_{n+1}(\infty)\}$.
Denote by $\phi(\alpha,c)$ the flux under the LIM $\cl(\alpha,c)$. \edit{From classical single-floor TASEP results \cite{Liggett-1975},
we know that: $\cl(\alpha,1) = \nu_{\alpha}$ when $0 \le \alpha \le 1/2$; $\cl(\alpha,1)$ behaves like $\nu_{1/2}$ at infinity when $\alpha > 1/2$;
$\phi(\alpha,1) = \gamma(1-\gamma)$ with $\gamma = \min\{\alpha,1/2\}$.}

For a distribution $\ch$ on $\Q$, we will denote by $\Psi_m \ch$ the corresponding distribution ``shifted up'' by $m \ge 0$, i.e. with the values of $Q_n$ increased by $m$. \edit{Note that $\cl(\alpha,c)$ is dominated by $\Psi_m \cl(\alpha,c-m)$ for any $m=0,1,\ldots, c-1$. 
(Because these measures are the limits of the distribution of $Q(t)$, starting in two initial states, one the empty state, and another the state with all sites having exactly $m$ particles.)
}

For a given state $Q(t)$ we define its projections $B_m(t)$ and $A_m(t)$, $m=1,\ldots,c$, as follows:
$$
B_m(t) = \sup\{n\ge 1 ~|~ Q_n(t) \ge m\} \le +\infty, ~~~ A_{m}(t) = \sup\{n\ge 1 ~|~ Q_k(t) \ge m, ~\forall k \le n\} \le +\infty,
$$
where, by convention, $B_m(t)=0$ or $A_m(t)=0$ if the set in the corresponding definition is empty. (See Figure~\ref{fig-anb}.)
Note that if, for a fixed $m \ge 2$, $B_m(t)=\infty$, then
by \eqn{eq-queue-cond} we have $Q_n(t) \ge m-1$ for all $n$ -- in other words the state is such that the floors $1, \ldots, m-1$ at all sites are occupied -- which means $A_{m-1}(t)=\infty$. Obviously, for any $\alpha>0$ and $c$, under LIM $\cl(\alpha,c)$, $B_1(\infty)=\infty$ a.s.

      \begin{figure}
\centering
        \includegraphics[width=5in]{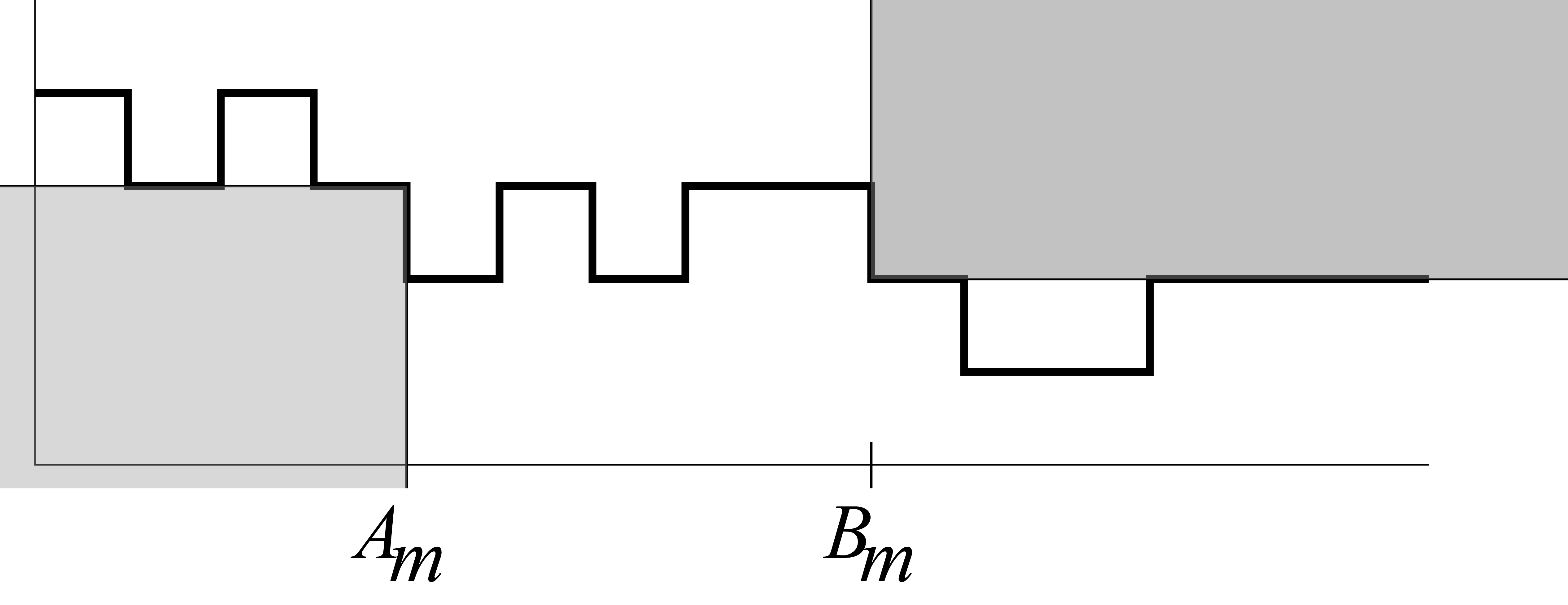}
    \caption{The projections $A_m, B_m$ are shown for $m=3$.}
    \label{fig-anb}
\end{figure}

Note that for any $m\ge 2$, $\pr\{B_{m}(\infty)=\infty\} =\pr\{A_{m-1}(\infty)=\infty\}$.
It is not hard to see that, under LIM $\cl(\alpha,c)$, for each fixed $m \ge 2$, $\pr\{B_{m}(\infty)=\infty\} =\pr\{A_{m-1}(\infty)=\infty\}$ is either $0$ or $1$.
Indeed, suppose  $\pr\{A_{m-1}(\infty)=\infty\} = \delta >0$. The process is such that the state sub-space $\{A_{m-1}(\infty)=\infty\}$ is invariant --
if the process starts in it, it can never leave it. The LIM $\cl(\alpha,c)$ dominates the distribution of the initial state which
is empty with probability $1-\delta$ and has exactly $m-1$ particles in each site with probability $\delta$. 
Therefore, LIM $\cl(\alpha,c)$ is the distributional time limit of the process starting from such initial state.
But then $\delta=\pr\{A_{m-1}(\infty)=\infty\} \ge \delta + (1-\delta)\delta$, which implies $\delta=1$.

We will say that a system in a stationary regime {\em has effective floor $1\le m \le c$} if $\pr\{B_{m}(\infty)=\infty\} =\pr\{A_{m}(\infty)<\infty\}=1$. This means that, a.s., all floors below $m$ are completely occupied, $Q_n$ is either $m$ or $m-1$ for all sufficiently large $n$, and $Q_n$ has each of these values ($m$ and $m-1$) at infinitely many sites. The properties described just above show that, for any $\alpha$ and $c$, LIM $\cl(\alpha,c)$ has a well-defined effective floor $m=m(\alpha,c)$.

Now, it is not difficult to see that $\phi(\alpha,c) \le 1/4$. Indeed, let $m$ be the effective floor of LIM $\cl(\alpha,c)$. Then the floors below $m$ are full and can be ignored as far as flux is concerned, in the sense that it is same as under LIM $\cl(\alpha,c-m+1)$ which has effective floor $1$. For a system with effective floor $1$, the argument, for example, in \cite{St2011-tandem-queues} shows that the flux cannot be greater than $1/4$; so,
$\phi(\alpha,c) \le 1/4$.

Next, it is easy to see that $\phi(\alpha,c)$ 
is non-decreasing Lipschitz continuous in $\alpha$ and  is non-decreasing 
in $c$. (This follows from coupling constructions in Sections~\ref{sec-alpha-coupling} and \ref{sec-c-coupling}.)
Obviously, $\phi(0,c)=0.$ For each $c \ge 1$, define
$$
\alpha_c^* = \min \{\alpha >0 ~|~ \phi(\alpha,c)=1/4\}.
$$
Because when $c=1$ the process is the classical (single-floor) TASEP, we know \cite[theorems 1.8 and 1.7]{Liggett-1975}
that $\alpha_1^*=1/2$, $\phi(\alpha,1) = \alpha(1-\alpha)$ for 
$\alpha \le \alpha_1^*=1/2$, and $\phi(\alpha,1) = 1/4$ for 
$\alpha \ge \alpha_1^*=1/2$. 
By $\phi(\alpha,c)$ monotonicity in $c$, the sequence $\alpha_c^*$ is non-increasing. So, for any $c\ge 1$, we have:
$\phi(\alpha,c)$ is Lipschitz continuous non-decreasing, with $\phi(\alpha,c)=1/4$ for $\alpha \ge \alpha_c^*$.

Our main result for the one-sided system is the following 

\begin{thm}
\label{th-main-res}

(i) The sequence $\alpha_1^*=1/2, \alpha_2^*, \ldots $ is strictly decreasing and 
$\alpha_c^* \downarrow 1/4$.

(ii) For any $c\ge 2$, under LIM $\cl(\alpha,c)$, we have the following. For any $\alpha < \alpha_{c-1}^*$ the effective floor is $1$;
and for any $\alpha \ge \alpha_{c-1}^*$, $\cl(\alpha,c)=\Psi_1 \cl(\alpha,c-1)$. 
Consequently: if $\alpha < \alpha_c^*$, then the effective floor is $1$ and $\phi(\alpha,c)< 1/4$; 
if $\alpha \in [\alpha_m^*,\alpha_{m-1}^*)$ for a fixed $1 \le m \le c$, then the effective floor is $c-m+1$, $\phi(\alpha,c)=1/4$, and $\cl(\alpha,c)=\Psi_{c-m} \cl(\alpha,m)$.
(Here we use convention that $\alpha_0^*=\infty$.)
\end{thm}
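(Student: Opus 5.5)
We prove (ii) first, by induction on $c$, and then read off (i) from it. The key object is the effective floor $m(\alpha,c)$, which is well defined by the 0--1 argument above. The basic dichotomy is this: either the effective floor is $1$, or $A_1(\infty)=\infty$ a.s. In the second case floor $1$ is permanently full. Then the dynamics on floors $2,\ldots,c$ is exactly that of the $(c-1)$-floor system shifted up by one, and the arrival stream is unchanged, since arrivals are blocked only when $Q_1=c$. So any stationary distribution supported on $\{A_1=\infty\}$ is $\Psi_1$ of a stationary distribution of the $(c-1)$-floor system. Combining this with the domination of $\cl(\alpha,c)$ by $\Psi_1\cl(\alpha,c-1)$ and the minimality of the LIM, we get $\cl(\alpha,c)=\Psi_1\cl(\alpha,c-1)$ whenever the effective floor exceeds $1$.

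The task therefore reduces to showing that the effective floor exceeds $1$ exactly when $\alpha\ge\alpha_{c-1}^*$.

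\emph{Necessity.} Suppose the effective floor is at least $2$, so $\cl(\alpha,c)=\Psi_1\cl(\alpha,c-1)$. The $(c-1)$-system under its LIM must then have infinitely many sites with $Q_n\ge 1$ at all times and zero density of holes. I would argue that this forces the flux $\phi(\alpha,c-1)=1/4$. If instead $\phi(\alpha,c-1)<1/4$, then by induction the $(c-1)$-system has effective floor $1$ and behaves to the right like a subcritical TASEP with density $\gamma<1/2$. In the $c$-system with floor $1$ completely full, the floor-$1$ particles far to the right never move, because they move only when the left neighbour has more particles. The right tail of the $c$-system is then a TASEP with density $1+\gamma$. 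A TASEP started from the empty state cannot build such a configuration: a hydrodynamic/flux comparison with the first-floor TASEP shows that particles reaching infinity have rate at most $1/4$, so floor $1$ can never be filled to infinity. I expect this step to need care.

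\emph{Sufficiency.} Suppose $\alpha\ge\alpha_{c-1}^*$. Couple the $c$-system with the $(c-1)$-system using the $c$-coupling of Section~\ref{sec-c-coupling}, so that the extra floor is filled from the left. The $(c-1)$-system already carries flux $1/4$, the maximal TASEP current on floor $1$. The extra blocking capacity at site $1$ only adds arrivals, and particles entering floor $1$ on the right cannot exit faster than $1/4$. A mass-balance argument then shows that the excess mass accumulates and fills floor $1$ all the way to infinity, i.e. $A_1=\infty$. I would make this precise by taking time-averages of the number of floor-$\ge 2$ particles in a growing window $[1,L]$ and showing that it grows linearly in $t$.

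Part (i) follows from (ii). First we show $\alpha_c^*<\alpha_{c-1}^*$. At $\alpha=\alpha_{c-1}^*$, (ii) gives $\cl(\alpha,c)=\Psi_1\cl(\alpha,c-1)$ with flux $1/4$. By Lipschitz continuity and monotonicity, we need to show that the flux is still $1/4$ slightly below $\alpha_{c-1}^*$. The argument is that, in the $c$-system, arrivals are blocked only when $Q_1=c$. Since $c$ exceeds the effective height $c-1$ attained by the $(c-1)$-system, this blocking probability is strictly smaller than the $(c-1)$-system's $\pr\{Q_1=c-1\}$. So the $c$-system's throughput $\alpha\pr\{Q_1<c\}$ exceeds $\phi(\alpha,c-1)$ by a positive margin, uniformly near $\alpha_{c-1}^*$, and this perturbation forces saturation earlier. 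I expect this strictness to be the main obstacle. I would try an explicit comparison of the site-$1$ occupancy under the $c$- and $(c-1)$-systems at $\alpha_{c-1}^*$.

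Second, for the limit, $\alpha_c^*\ge 1/4$ holds trivially, since $\phi\le\alpha$. For any $\alpha>1/4$, we use the result of \cite{St2011-tandem-queues} that with $c=\infty$ the queues grow without bound. For finite but large $c$, blocking at site $1$ then becomes rare while floor $1$ saturates, so $\phi(\alpha,c)\to 1/4$, and in fact equals $1/4$ once $c$ is large. This gives $\alpha_c^*\downarrow 1/4$.
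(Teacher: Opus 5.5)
\textbf{There is a genuine gap.} Your opening reduction is correct, and the paper uses it: effective floor $\ge 2$ forces $\cl(\alpha,c)=\Psi_1\cl(\alpha,c-1)$, by domination and minimality of the LIM. The remaining steps, however, are invalid or circular.

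Your necessity argument proves too much. The inference ``at most rate $1/4$ reaches infinity, so floor $1$ is never filled to infinity'' never uses $\gamma<1/2$. It would equally forbid $A_1(\infty)=\infty$ when $\phi(\alpha,c-1)=1/4$, yet the theorem asserts floor $1$ is full for all $\alpha\ge\alpha^*_{c-1}$. Sufficiency needs no mass balance: $\phi(\alpha,c)\ge\phi(\alpha,c-1)=1/4$ and $\Psi_1\cl(\alpha,c-1)$ dominates $\cl(\alpha,c)$, so Lemma~\ref{lem-strict-dom}(ii) gives equality.

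Your derivation of (i) is circular or false. At $\alpha=\alpha^*_{c-1}$ you have just shown $\cl(\alpha,c)=\Psi_1\cl(\alpha,c-1)$, so the two blocking probabilities are equal, not strictly ordered. For $\alpha<\alpha^*_{c-1}$ their difference is exactly $(\phi(\alpha,c)-\phi(\alpha,c-1))/\alpha$, so a ``positive margin'' just restates the conclusion. A margin uniform near $\alpha^*_{c-1}$ would even push $\phi(\alpha,c)$ above $1/4$.

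For the limit, blocking does not become rare. By Lemma~\ref{lem-blocking-prob} and $\phi\le 1/4$, its probability is at least $(\alpha-1/4)/\alpha$ for every $c$. The $c=\infty$ result of \cite{St2011-tandem-queues} helps only for $\alpha<1/4$. Even $\phi(\alpha,c)\to 1/4$ would not give $\phi(\alpha,c)=1/4$ at any finite $c$.

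\textbf{The missing idea} is the paper's coupling. It proves (i) directly and then derives (ii), the reverse of your order.
\begin{itemize}
\item Lemma~\ref{lem-rate-to-floor-c} lower-bounds the rate $r_{2,in}(\alpha,c)$ at which new (``blue'') particles are placed on floor $c$ by a continuous positive $s(\alpha,c)$. For $\alpha\in(1/4,1/2]$ it also keeps this rate bounded away from $0$ as $c\to\infty$, via a compactness argument on vacancies.
\item In the coupling of the $(\alpha,c-1)$- and $(\alpha^*_{c-1},c-1)$-systems, the extra ``red'' particles enter at rate at most $\alpha^*_{c-1}-\alpha$ and carry flux $1/4-\phi(\alpha,c-1)$.
\item Suppose the blue entry rate exceeds the red one. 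Then ``purple'' copies of blue particles, queued at the entrance and admitted only inside red particles, eventually occupy all but finitely many red particles. So black+purple has flux $1/4$.
\item Lemma~\ref{lem-ahead} puts black+blue, i.e.\ the $c$-system, ahead of black+purple. Hence $\phi(\alpha,c)=1/4$ for $\alpha$ slightly below $\alpha^*_{c-1}$, which gives strict decrease.
\item Applying the uniform-in-$c$ bound at the would-be limit $\lim_c\alpha^*_c>1/4$, where the red entry rate $\alpha^*_{c-1}-\alpha$ tends to $0$, gives a contradiction. Hence $\alpha^*_c\downarrow 1/4$.
\end{itemize}
With (i) in hand, (ii) is short. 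For $\alpha\in(\alpha^*_c,\alpha^*_{c-1})$, effective floor $\ge 2$ would force $\phi(\alpha,c)=\phi(\alpha,c-1)<1/4$, contradicting $\phi(\alpha,c)=1/4$. Monotonicity in $\alpha$ covers smaller $\alpha$. Lemma~\ref{lem-strict-dom}(ii) handles $\alpha\ge\alpha^*_{c-1}$.
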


The proof of Theorem~\ref{th-main-res} is in Sections~\ref{sec-main-proof-i}-\ref{sec-main-proof-ii}.

Essentially as a corollary of the proof of Theorem~\ref{th-main-res}, 
and of the results in \cite{St2011-tandem-queues}, 
we obtain Lemma~\ref{lem-phi-props} below, which summarizes properties of function $\phi(\alpha,c)$.

 For an $\alpha>1/4$, denote by $c_\alpha$ the minimum number $c$ of floors, for which $\phi(\alpha,c)=1/4$:
$c_\alpha \doteq \min\{c \ge 1 ~|~ \phi(\alpha,c)=1/4\} \equiv \min\{c \ge 1 ~|~ \alpha \ge \alpha^*_{c}\}$.
We will use convention $c_\alpha=\infty$ for $0<\alpha \le 1/4$, so that $c_\alpha$ is defined for all $\alpha>0$.

\begin{lem}
\label{lem-phi-props}
Function $\phi(\alpha,c)$ is strictly increasing, concave, Lipschitz continuous in 
  $\alpha$ in $[0,\alpha^*_c]$. For a fixed $\alpha >1/4$, $\phi(\alpha,c)$ is strictly increasing in $1 \le c \le c_\alpha$, and $\phi(\alpha,c)=1/4$ for $c \ge c_\alpha$.
   For a fixed $\alpha \le 1/4$, $\phi(\alpha,c)$ is strictly increasing in $c \ge 1$,  and $\phi(\alpha,c) \uparrow \alpha$ as $c\uparrow \infty$.
\end{lem}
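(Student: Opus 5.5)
We prove the three groups of claims in turn. The tools are Theorem~\ref{th-main-res}, the couplings in $\alpha$ and $c$ (Sections~\ref{sec-alpha-coupling} and \ref{sec-c-coupling}), and the results of \cite{St2011-tandem-queues} for $c=\infty$.

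\emph{Properties in $\alpha$.} Lipschitz continuity and monotonicity are already available from the $\alpha$-coupling. For concavity we use the representation $\phi(\alpha,c)=\alpha\,\pr\{Q_1(\infty)<c\}$, viewing the one-sided system as a queue at site $1$ fed by a Poisson stream. We realize the rate-$\alpha$ arrival process as a thinning of a fixed rate-$1$ process, with independent marks $U_i$, and admit an arrival iff $U_i\le\alpha$. Let $\alpha_1<\alpha_2$ and $\alpha=\lambda\alpha_1+(1-\lambda)\alpha_2$. Marginal analysis in $\alpha$ should then show that the derivative $\partial_\alpha\phi$ is non-increasing. This derivative is the probability that an extra arriving particle at rate level $\alpha$ eventually increases the long-run throughput. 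By the monotone coupling, an extra particle added to a larger configuration is more likely to be absorbed, i.e.\ blocked or swallowed by a ``crowded'' configuration. This is the classical diminishing-returns argument for blocking systems.

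We expect this step to be the main obstacle. The first attempt is to show that, under the coupling, the discrepancy created by an extra arrival in the larger system is dominated by the one created in the smaller system. This is a second-class-particle argument. The fallback is an alternative: prove concavity on $[0,\alpha^*_c)$ piecewise using the effective-floor structure, since the effective floor is $1$ there by Theorem~\ref{th-main-res}(ii).

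Strict increase on $[0,\alpha_c^*]$ then follows from concavity: $\phi(0,c)=0$, and a concave non-decreasing function that first reaches its maximum $1/4$ at $\alpha_c^*$ cannot be constant on any subinterval before $\alpha_c^*$. If it were, concavity would force it to stay constant thereafter, contradicting the fact that it increases to $1/4$ only at $\alpha_c^*$.

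\emph{Monotonicity in $c$ for $\alpha>1/4$.} Fix $c<c_\alpha$, so that $\alpha<\alpha_c^*$. We must show $\phi(\alpha,c)<\phi(\alpha,c+1)$. By Theorem~\ref{th-main-res}(i) the thresholds are strictly decreasing. If $\alpha\ge\alpha_{c+1}^*$, then $\phi(\alpha,c+1)=1/4>\phi(\alpha,c)$ and we are done. Otherwise both LIMs have effective floor $1$ and flux below $1/4$. In that case we use the $c$-coupling (Section~\ref{sec-c-coupling}): the $(c+1)$-system dominates the $c$-system, and the two differ by blocked arrivals. With positive probability, $Q_1=c$ while the $(c+1)$-system admits the particle. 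That particle is never lost, because there is no departure in the one-sided system. So its accepted rate is strictly larger, giving strict inequality. That $\phi(\alpha,c)=1/4$ for $c\ge c_\alpha$ follows from $\alpha_c^*$ being non-increasing together with the definition of $c_\alpha$.

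\emph{The case $\alpha\le 1/4$.} Here $c_\alpha=\infty$, so $\alpha<\alpha_c^*$ for every $c$, and the strictness argument above applies for every $c$. Moreover $\phi(\alpha,c)=\alpha\,\pr\{Q_1(\infty)<c\}\le\alpha$. For the limit we compare with the $c=\infty$ system of \cite{St2011-tandem-queues}. For $\alpha<1/4$ that system has stationary queues that are $O(1)$ uniformly, so $\pr\{Q_1(\infty)\ge c\}\to0$, using that the finite-$c$ LIMs are dominated by the $c=\infty$ one via the $c$-coupling. Hence $\phi(\alpha,c)\uparrow\alpha$. The boundary case $\alpha=1/4$ follows from the Lipschitz continuity of $\phi$ in $\alpha$, uniformly in $c$, together with $\phi(\alpha,c)\le\alpha$.
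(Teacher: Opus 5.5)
Your proposal has two genuine gaps. The remaining pieces are fine and match the paper: Lipschitz continuity and monotonicity from the $\alpha$-coupling, $\phi(\alpha,c)=1/4$ for $c\ge c_\alpha$, and $\phi(\alpha,c)\uparrow\alpha$ via domination by the $c=\infty$ LIM of \cite{St2011-tandem-queues}, with $\alpha=1/4$ handled by monotonicity/continuity in $\alpha$.

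\textbf{Concavity, and hence strict increase, in $\alpha$.} This step is not proved. You posit a derivative $\partial_\alpha\phi$ whose existence is not known and interpret it as the survival probability of an extra particle. You then assert that this probability decreases as the configuration grows, but that assertion is essentially the content of concavity, and you give no mechanism for it. The ``fallback'' via the effective floor says nothing about the shape of $\phi$ in $\alpha$. Your strict monotonicity in $\alpha$ is derived from concavity, so it inherits the gap.

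The missing idea is to avoid derivatives and compare classes within one system. For $\alpha<\hat\alpha<\alpha'$, the paper runs the three-class priority coupling of Section~\ref{sec-priority-coupling} with attempted rates $\alpha$, $\hat\alpha-\alpha$, $\alpha'-\hat\alpha$. Then classes $\{1\}$, $\{1,2\}$, $\{1,2,3\}$ are BP systems with rates $\alpha,\hat\alpha,\alpha'$. The net flux of class 2 (resp.\ class 3) per unit attempted rate is exactly the chord slope of $\phi$ over $[\alpha,\hat\alpha]$ (resp.\ $[\hat\alpha,\alpha']$). Because class 2 has priority over class 3, an arriving class-2 particle is at least as likely as a class-3 particle to enter and stay, so the first slope is at least the second. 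That is concavity. Taking $\alpha'=\alpha_c^*$ also gives strict increase directly, since the slope over $[\alpha,\alpha_c^*]$ is $(1/4-\phi(\alpha,c))/(\alpha_c^*-\alpha)>0$.

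\textbf{Strict increase in $c$.} The claim that an extra admitted particle ``is never lost, because there is no departure'' is false. In the coupling of Section~\ref{sec-c-coupling} (with $c+1$ in place of $c$), a class-2 particle is removed from site 1 whenever a class-1 particle enters while the $(c+1)$-system's site 1 is full. More generally, whenever an arrival is accepted by the smaller system and blocked by the full larger one, the discrepancy at site 1 shrinks. The flux gain is $r_{2,in}-r_{2,out}$, and your positive-probability admission event only shows $r_{2,in}>0$. The effective-floor information you record in the ``otherwise'' case is never used afterward, yet it is exactly what is needed.

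The paper argues as follows. In the $(c+1)$-floor system, $\Psi_1\cl(\alpha,c)$ is a stationary distribution with flux $\phi(\alpha,c)$, and it dominates $\cl(\alpha,c+1)$. For $\alpha<\alpha_c^*$, Theorem~\ref{th-main-res}(ii) gives $\cl(\alpha,c+1)$ effective floor $1$, whereas under $\Psi_1\cl(\alpha,c)$ floor $1$ is completely occupied, so the two measures differ. Lemma~\ref{lem-strict-dom}(i) then gives $\phi(\alpha,c)<\phi(\alpha,c+1)$. This needs no case split on $\alpha_{c+1}^*$, and it also supplies the strictness you invoke for $\alpha\le 1/4$.
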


Proof of Lemma~\ref{lem-phi-props} is in Section~\ref{sec-phi-props-proof}.

\subsection{Coupling constructions}
\label{sec-coupling-contr}

\subsubsection{Coupling of processes under back-pressure algorithm with particles of different priority classes}
\label{sec-priority-coupling}

Suppose we have our system, but with particles belonging to different priority classes, say classes $1, \ldots, \ell$, listed in the order of decreasing priority.
We specify the coupling construction of such a system, which has the following property. If we only ``view'' the evolution of class-1 (highest priority) particles,
this evolution complies with the back-pressure rules. If we only ``view'' the evolution of the set of particles combining classes 1 and 2, this evolution 
also complies with the back-pressure rules. Similarly for combined classes 1, 2, and 3; and so on. The construction will be concerned only with the movement of particles between sites, not exogenous arrivals of particles to site 1 -- an arrival mechanism will be specified later for each concrete system version that we will consider. 

We will use notation $Q_n^{(k)} \ge 0$ for the number of particles of class $k$ at site $n$, and $Q_n^{(\le k)} = \sum_{i \le k} Q_n^{(i)}$ for the total
number of particles of classes $1, \ldots, k$ at site $n$. (It is possible to have more than one particle of a given class at a site, i.e. 
$Q_n^{(k)} \in \{0,1,2, \ldots\}$.)

Consider first a system with two classes, 1 and 2, to simplify exposition. 
When the bell rings for the site pair $(n,n+1)$, the following sequence of events occurs: 1) If $Q_n^{(1)} > Q_{n+1}^{(1)}$,
one class-1 particle moves from $n$ to $n+1$; given that, if site $n+1$ had class-2 particles, then one of them moves back to site $n$. 2) All variables updated. 3) If new value of $Q_n^{(\le 2)}$ is same as the old one \edit{(which occurs when in step 1 either particles did not move at all or one class-1 and one class-2 particles exchanged places)}
and $Q_n^{(\le 2)} > Q_{n+1}^{(\le 2)}$, then one class-2 particle moves from $n$ to $n+1$. It is easy to see that both systems consisting 
of class-1 particles only and consisting of class 1 and 2 combined, comply with the back-pressure rule.

Extension to arbitrary number of classes is straightforward: when the bell rings for the site pair $(n,n+1)$,
sequentially for $k=1,\ldots,\ell$, if the current value of $Q_n^{(\le k)}$ is the same as before the bell ring
and $Q_n^{(\le k)} > Q_{n+1}^{(\le k)}$ we move one class-$k$ particle from $n$ to $n+1$;
if we do move a class-$k$ particle to $n+1$, then one particle of the smallest class $i > k$, present at $n+1$ (if any!), moves back to site $n$.

It is sometimes useful to visualize particles of a higher priority (smaller index) class as occupying lower floors than particles of lower priority (larger index) class. Namely, at site $n$, class-1 particles occupy floors $1,\ldots, Q_n^{(\le 1)}$, class-2 particles occupy floors $Q_n^{(\le 1)}+1, \ldots, Q_n^{(\le 2)}$,
and so on.

\subsubsection{Coupling of processes with different rates $\alpha$}
\label{sec-alpha-coupling}

We now describe coupling of $(\alpha,c)$- and $(\hat \alpha,c)$-systems, $\alpha < \hat \alpha$,
complying with back-pressure rules. The coupled process will have particles of higher and lower priority, of class 1 and 2, respectively
(in the sense of Section~\ref{sec-priority-coupling}). Higher priority, class 1 particles model those in the $(\alpha,c)$-system. They try to enter the system exogenously, at site $1$ at rate $\alpha$; as usual, they do enter
when $Q_1^{(1)} < c$, and are blocked and leave otherwise; if a class $1$ particle does enter site $1$ while $Q_1^{(\le 2)} = c$,
one class-2 particle is removed (``forced back'') from site 1 and leaves the system. There is also the additional independent  
Poisson process, with rate 
$(\hat\alpha-\alpha)$, of exogenous arrival attempts (arrival bell rings) of lower priority, class-2 particles; they actually do enter site $1$ if $Q_1^{(\le 2)} < c$, and are blocked and leave otherwise. The movement of particles between the sites is as specified in Section~\ref{sec-priority-coupling}.
(Note that, under described coupling, it is possible to have more than one particle of a given class at a site.)

Clearly, if we only ``view'' class-1 particles, they describe the evolution of $(\alpha,c)$-system. The evolution of all particles, class 1 and 2 combined,
is that of $(\hat \alpha,c)$-system. If we start this coupled process from the empty state, its distribution converges to the LIM, whose projections on the class-1 only and combined class 1 and 2 processes, are $\cl(\alpha,c)$ and $\cl(\hat\alpha,c)$, respectively. 
When we refer to steady-state in this section, we refer to this LIM. 

Note the following. The steady-state rate at which class-1 particles actually enter site 1 is exactly $\phi(\alpha,c)$, because 
class 1 particles entering the system never leave it. Class-2 particles, on the other hand, can move not only forward but also backward, and can leave the system (when they are forced out from site 1). If $r_{2,in} \le \hat\alpha-\alpha$ is the steady-state rate at which class-2 particles exogenously enter site 1, and 
$r_{2,out} \le r_{2,in}$ is the steady-state rate at which they leave the system, then $r_{2,in} - r_{2,out}$ is the contribution of class-2 particles
into the flux of combined -- classes 1 and 2 -- system. In other words, $\phi(\hat\alpha,c)=\phi(\alpha,c) + r_{2,in} - r_{2,out}$. 
(The fact that $r_{2,out} \le r_{2,in}$ is immediate from the coupling construction, 
because the steady-state refers to the LIM reached in the limit, starting from the empty state.)
In particular, $\phi(\hat\alpha,c)-\phi(\alpha,c) \le r_{2,in} \le \hat\alpha-\alpha$, and so $\phi(\alpha,c)$ is non-decreasing Lipschitz in $\alpha$.

\subsubsection{Coupling of processes with different number $c$ of floors}
\label{sec-c-coupling}

In this section we describe coupling of $(\alpha,c-1)$- and $(\alpha,c)$-systems, complying with back-pressure rules. 
The coupled process will have particles of higher and lower priority, of class 1 and 2, respectively
(in the sense of Section~\ref{sec-priority-coupling}). Class-1 only particles will describe the evolution of $(\alpha,c-1)$-system,
while the combined class-1 and -2 particles will describe $(\alpha,c)$-system. 
Particles try to enter site 1 exogenously as a Poisson process (of arrival bell rings) of rate $\alpha$. 
When a particle tries to enter site 1 (i.e., when the arrival bell rings), then: if $Q_1^{(1)} < c-1$, it will enter site 1 as a class-1 particle,
and if this occurs when $Q_1^{(\le 2)} = c$, one class-2 particle is removed from site 1;
if $Q_1^{(1)} = c-1$ and $Q_1^{(2)} = 0$, then it will enter site 1 as a class-2 particle; 
otherwise, the particle attempting to enter is blocked. The movement of particles between the sites is as specified in Section~\ref{sec-priority-coupling}.
Clearly, if we only ``view'' class-1 particles, they describe the evolution of $(\alpha,c-1)$-system. The evolution of all particles, class 1 and 2 combined,
is that of $(\alpha,c)$-system. (Note that, under described coupling, it is possible to have more than one particle of a given class at a site.)

It will be convenient to adopt the following convention. Suppose there is an exogenous arrival attempt (arrival bell ring) at site 1 when $Q_1^{(1)} < c-1$
and $Q_1^{(\le 2)} = c-1$, in which case $Q_1^{(1)}$ increases by 1 and $Q_1^{(\le 2)}$ becomes $c$. In such a case, although
$Q_1^{(2)}$ does not change, we will assume that from site 1 one class-2 particle is removed (from floors $1,\ldots,c-1$)
and another class-2 particle is immediately added (on floor $c$). With this convention:
a new class-2 particle is added to site 1 when and only when an arrival bell rings and $Q_1^{(\le 2)} = c-1$,
i.e. when in the combined (class 1 and 2) system a new particle is added on floor $c$;
a class-2 particle is removed from site 1 
when and only when an arrival bell rings and $Q_1^{(1)} < c-1$ and $Q_1^{(\le 2)} \ge c-1$.

If we only ``view'' class-1 particles, they describe the evolution of $(\alpha,c-1)$-system. The evolution of all particles, class 1 and 2 combined,
is that of $(\alpha,c)$-system. If we start this coupled process from the empty state, its distribution converges to the LIM, whose projections on the class-1 only and combined class 1 and 2 processes, are $\cl(\alpha,c-1)$ and $\cl(\alpha,c)$, respectively. 
In this section the steady-state refers to this LIM. 

Clearly, $\phi(\alpha,c)=\phi(\alpha,c-1) + r_{2,in} - r_{2,out}$, 
where $r_{2,in}$ is the steady-state rate
at which class-2 particles are added at site 1, and $r_{2,out}$
 is the steady-state rate at which class-2 particles are removed from at site 1. 
 We have $r_{2,out} \le r_{2,in}$ 
 (which is immediate from the coupling construction, 
because the steady-state refers to the LIM reached in the limit, starting from the empty state.)
In particular, $\phi(\alpha,c) \ge \phi(\alpha,c-1)$.
Note that $r_{2,in} = \alpha \pr\{Q_1^{(\le 2)}(\infty) = c-1\}$  and $r_{2,out}= \alpha \pr\{Q_1^{(1)}(\infty) < c-1, Q_1^{(\le 2)}(\infty) \ge c-1\}$.

\subsubsection{Coupling of processes with one being ``impeded''  with respect to another}
\label{sec-ahead}

We will need one additional monotonicity property, which is a version of \cite[proposition 7]{St2011-tandem-queues}. It basically says that if we have two versions of the process, such that the movement of particles in the first one is ``impeded'' w.r.t. the movement of particles in the second, then particles in the first one will always stay ``behind." For completeness, we state the property that we need as Lemma~\ref{lem-ahead} below, and give a proof.

For a process state $Q$, denote by $\|Q\| = \sum_n Q_n$ the total number of particles. So, if $\|Q\| < \infty$, then only sites from a finite subset are occupied.
Additional notation: the partial order relation
$Q \preceq Q'$ means that both $\|Q\| < \infty$ and $\|Q'\|<\infty$, and 
$$
\sum_{k\ge n} Q_k \le \sum_{k\ge n} Q'_k, ~~\forall n\ge 1.
$$

\begin{lem}
\label{lem-ahead}
Consider a fixed realization of process $Q'(\cdot)$, with $\|Q'(0)\| < \infty$, 
complying with the BP rule, under fixed realizations of the bell ring processes for pairs $(n,n+1)$ for all $n \ge 1$, and a fixed realization of the exogenous arrival process at site $1$, which counts particles {\em actually entering} site $1$. 
Assume that no more than one epoch of the above driving realizations can occur at a time, and the realization $Q'(\cdot)$ is well-defined in that 
only a finite number of particle arrivals/movements occur in any finite time interval.
Consider another fixed process realization $Q(\cdot)$, also complying with BP and under the same realizations
of bell rings for site pairs $(n,n+1)$, but such that $Q(0) \preceq Q'(0)$
and with the exogenous arrivals realization being ``delayed'' w.r.t. to $Q'(\cdot)$, namely with the total number of arrivals by any time $t$ being not greater.
Then $Q(t) \preceq Q'(t), \forall t$.
\end{lem}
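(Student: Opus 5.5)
The plan is to argue pathwise, by induction over the (locally finite, ordered) sequence of event epochs. Because $\|Q'(0)\|<\infty$ and $Q(0)\preceq Q'(0)$, also $\|Q(0)\|<\infty$. Both processes only ever occupy finitely many sites, so on any finite time interval only finitely many events are relevant: bells for pairs $(n,n+1)$ far to the right of all particles do nothing. It therefore suffices to show that the relation $Q\preceq Q'$ is preserved by every single event, whether that event is an actual arrival to $Q'$, an actual arrival to $Q$, or a bell ring for a pair $(n,n+1)$. We work with the tail sums
$$S_n=\sum_{k\ge n}Q_k,\qquad S'_n=\sum_{k\ge n}Q'_k.$$
The target is $S_n\le S'_n$ for all $n$; note that $S_1=\|Q\|$.

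\textbf{Arrivals.} An arrival to site $1$ changes only $S_1$, raising it by one. Arrivals to $Q'$ only increase $S'_1$, so they are harmless. For an arrival to $Q$ we use the delay hypothesis on cumulative counts, together with the fact that bell rings at pairs conserve the total particle count. Writing $\|Q(t)\|=\|Q(0)\|+(\text{arrivals to }Q\text{ by }t)$, and likewise for $Q'$, we get $S_1(t)\le S'_1(t)$ at all times. This holds because $\|Q(0)\|\le\|Q'(0)\|$ and the arrival counts are ordered. Hence the coordinate $n=1$ is never a problem, and arrivals do not affect $S_n$ for $n\ge2$.

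\textbf{Bell at pair $(n,n+1)$.} Such a bell changes only $S_{n+1}$: it increases by one in a process exactly when that process has $Q_n>Q_{n+1}$. The only dangerous case is the following. Just before the bell, $S_{n+1}=S'_{n+1}$; in $Q$ we have $Q_n>Q_{n+1}$, so a particle moves; in $Q'$ we have $Q'_n\le Q'_{n+1}$, so no particle moves. We must rule this case out, and this is the key step. From $S_n\le S'_n$ and $S_{n+1}=S'_{n+1}$ we get $Q_n\le Q'_n$. From $S_{n+2}\le S'_{n+2}$ and $S_{n+1}=S'_{n+1}$ we get $Q_{n+1}\ge Q'_{n+1}$. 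Then
$$Q'_n\ge Q_n>Q_{n+1}\ge Q'_{n+1},$$
so $Q'_n>Q'_{n+1}$ and a particle moves in $Q'$ as well, a contradiction. In every other case the inequality $S_{n+1}\le S'_{n+1}$ either had slack of at least one, or $S'_{n+1}$ increases together with $S_{n+1}$. Either way it is preserved. The bounded-capacity blocking at the entrance plays no role here, since the arrival processes are given as actual entries.

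\textbf{Main obstacle.} The key step above is short. The main subtlety is the bookkeeping that makes the induction legitimate. First, we use the assumptions that no two driving epochs coincide and that only finitely many moves occur in bounded time; these let us order the events and apply the one-step argument. Second, we must check that $\|Q(t)\|<\infty$, so that $\preceq$ remains defined; this follows from the finite initial mass and the finitely many arrivals by time $t$.
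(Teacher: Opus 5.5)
Your proof is correct and follows essentially the same route as the paper: induction over event epochs on the tail sums, with the same key step that $S_{n+1}=S'_{n+1}$ together with $Q\preceq Q'$ forces $Q_n\le Q'_n$ and $Q_{n+1}\ge Q'_{n+1}$, so a move in $Q$ forces a move in $Q'$. Your total-count argument for the arrival case, and your remarks on ordering the events, spell out details the paper treats as obvious.
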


{\em Proof} is by induction in time. Suppose $Q(t) \preceq Q'(t)$ holds for all times $t$ before time $\tau$ of either exogenous arrival or 
$(n,n+1)$-bell ring. So, we have $Q(\tau-) \preceq Q'(\tau-)$. Let us show that $Q(\tau) \preceq Q'(\tau)$. (We use convention that the realizations are right-continuous.) This is obviously true if $\tau$ is a time of an exogenous arrival. So, suppose at $\tau$ the $(n,n+1)$-bell rings. Suppose, $Q(\tau) \preceq Q'(\tau)$ does {\em not} hold. This can only be the case if a particle moves from $n$ to $n+1$ in $Q(\cdot)$, but not in $Q'(\cdot)$, and $\sum_{\ell \ge n+1} Q_{\ell}(\tau-) = \sum_{\ell \ge n+1} Q'_{\ell}(\tau-)$. But the latter equality, along with $Q(\tau-) \preceq Q'(\tau-)$, implies that $Q_{n}(\tau-) \le Q'_{n}(\tau-)$ and
$Q_{n+1}(\tau-) \ge Q'_{n+1}(\tau-)$, which in turn means that, according to BP rule, a particle has to move from $n$ to $n+1$ in $Q'(\cdot)$ as well.
The contradiction completes the proof.
$\Box$

Note that Lemma~\ref{lem-ahead} is a deterministic property -- it holds for process realizations. This makes it applicable
to situations where driving processes (driving exogenous arrivals and/or particle movement between sites) are {\em not} necessarily independent. 
We also remark that Lemma~\ref{lem-ahead} holds even more generally -- realization $Q(\cdot)$ may be additionally ``impeded'' by disabling (ignoring) some of the $(n,n+1)$-bell rings for it, but not for $Q'(\cdot)$ (see \cite[proposition 7]{St2011-tandem-queues}); same proof applies.

\subsection{Auxiliary lemma}
\label{sec-strict-dom}

Throughout the paper the following fact is used several times.
\edit{
\begin{lem}
\label{lem-strict-dom}
Consider one-sided system with $c\ge 1$ and $\alpha \in (0,1]$. 
Suppose the process $Q(\cdot)$ has two stationary distributions (invariant measures), $\tilde \ch$ and $\ch$, such that $\tilde \ch$ dominates
$\ch$.\\
(i) If $\tilde \ch$  is not equal to $\ch$,  
then \\ $\pr\{Q_1(t) = c\}$ under $\tilde \ch$ is strictly larger than under $\ch$ (and then $\tilde \ch$ has strictly smaller flux).\\
(ii) If $\tilde \ch$ and $\ch$ have equal flux, then $\tilde \ch = \ch$.
\end{lem}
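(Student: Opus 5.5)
We will prove (i) by a coupling argument that produces, with positive probability, a joint state in which $\tilde Q_1 = c$ but $Q_1 < c$. Part (ii) then follows at once: the flux equals $\alpha \pr\{Q_1(\infty)<c\}$ with $\alpha>0$, so equal fluxes force equal probabilities $\pr\{Q_1(\infty)=c\}$, and by (i) the two measures must coincide.

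First we construct a stationary coupling. Take random elements $Q(0)\le \tilde Q(0)$ with distributions $\ch$ and $\tilde\ch$, which exist by domination. Run both processes from these states under the same driving Poisson processes (same arrival bells and same pair bells). The basic monotonicity property then gives $Q(t)\le\tilde Q(t)$ for all $t$, and the marginals stay $\ch$ and $\tilde\ch$ by stationarity. Hence at each fixed time, say $t=1$, the pair $(Q(1),\tilde Q(1))$ is an ordered coupling with the correct marginals. We do not even need a jointly invariant measure: it suffices to show $\pr\{\tilde Q_1(1)=c,\ Q_1(1)<c\}>0$. Together with $\{Q_1=c\}\subseteq\{\tilde Q_1=c\}$, this gives $\pr_{\tilde\ch}\{Q_1=c\} > \pr_{\ch}\{Q_1=c\}$.

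The core of the argument is the following step. Since $\tilde\ch\ne\ch$ and $Q(0)\le\tilde Q(0)$, there exist $n$ and a finite-window event of positive probability on which $\tilde Q_k(0)\ge Q_k(0)$ for all $k$ and $\sum_{k\le n}\tilde Q_k(0)\ge \sum_{k\le n}Q_k(0)+1$. On $[0,1]$ we then prescribe a finite sequence of bell rings. This has positive probability because only finitely many Poisson processes are involved and we additionally require the $(n,n+1)$ bell not to ring.
\begin{enumerate}
\item \emph{Filling.} We first ring the arrival bell and the bells $(k,k+1)$, $k<n$, in a pattern that fills $\tilde Q$ up to $\tilde Q_k=c$ for all $k\le n$. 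For example, repeatedly let one particle arrive and then ring $(1,2),(2,3),\ldots$ to push it as far right as BP allows; since particles cannot leave $[1,n]$, $\tilde Q$ saturates.
\item \emph{Deficit is preserved.} During filling, the deficit $d=\sum_{k\le n}(\tilde Q_k-Q_k)$ is nondecreasing. Inner bells conserve both sums, the $(n,n+1)$ bell is off, and an arrival is accepted by $\tilde Q$ but blocked for $Q$ only if $Q_1<c=\tilde Q_1$, which never happens, since $\tilde Q_1=c$ whenever $Q_1=c$ by domination. The rings that fill $\tilde Q$ also fill $Q$ in the same manner, so at the end $Q$ consists of $c$'s except for one site $j$ with $Q_j=c-1$ (the total being exactly $cn-1$ if $d=1$, after continuing to pump arrivals).
\item \emph{Moving the hole to site 1.} We then ring $(j-1,j),(j-2,j-1),\ldots,(1,2)$ in turn, with no arrivals. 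Each ring moves the ``hole'' of $Q$ one step left, because $Q_{i}=c>c-1=Q_{i+1}$ at the moment $(i,i+1)$ rings. Meanwhile $\tilde Q\equiv c$ on $[1,n]$ does not change. At the end $\tilde Q_1=c$ and $Q_1=c-1$.
\end{enumerate}

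I expect the main obstacle to be step 2: making rigorous that a single explicit ring pattern simultaneously saturates $\tilde Q$ on $[1,n]$ and leaves $Q$ with exactly the $c$'s-plus-one-$(c-1)$ shape, while keeping the BP constraint \eqn{eq-queue-cond} in view. If the total deficit $d$ exceeds 1, there is an extra complication: we will first apply step 3 to push all holes to the left, so that $Q_1<c$ results anyway. The fallback plan is a monovariant argument: under ``arrival, then sweep $(1,2),\ldots,(n-1,n)$'' cycles, the sum $\sum_{k\le n}Q_k$ increases until it equals $cn-d$. Any such configuration $Q\le\tilde Q\equiv c$ has $Q_1<c$ after a final leftward sweep of holes, which suffices.
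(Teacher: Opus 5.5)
Your coupling setup and the reduction of (ii) to (i) are fine, but Step~2 rests on a false claim, and as written it can fail. The deficit $d=\sum_{k\le n}(\tilde Q_k-Q_k)$ changes only at arrival rings. It \emph{decreases} exactly when $Q$ accepts while $\tilde Q$ blocks, i.e.\ when $Q_1<c=\tilde Q_1$. Your sentence has the two cases reversed. Domination excludes only $\tilde Q_1<c=Q_1$, which would \emph{increase} $d$. The configuration $Q_1<c=\tilde Q_1$ certainly occurs: it is the very configuration you are trying to produce.

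So your filling loop can erase the discrepancy. Take $c=1$, $n=3$, $\tilde Q(0)=(0,1,0)$, $Q(0)=(0,0,0)$. One cycle ``arrival, then $(1,2)$, $(2,3)$'' gives $\tilde Q=(1,0,1)$ and $Q=(0,0,1)$. The next arrival makes $Q=\tilde Q$ on $[1,3]$. After that the two copies move identically on the sealed window, and Step~3 has nothing to move. The instruction ``after continuing to pump arrivals'' once $\tilde Q$ is saturated fails for the same reason.

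The repair is a stopping rule. Ring the arrival bell only when $\tilde Q_1<c$; then both copies accept and $d$ is unchanged. The first time $\tilde Q_1=c>Q_1$ holds, stop, and let neither the arrival bell nor the $(1,2)$ bell ring again before time $1$. With this rule $d\ge 1$ is invariant until you stop. Your sweeps still saturate $\tilde Q$ on $[1,n]$: each left-to-right sweep moves the leftmost deficient site of $\tilde Q$ at least one step left, so an accepted arrival occurs within $n$ sweeps. Then the general leftward sweep of your fallback, with no arrivals, yields $Q_1<c=\tilde Q_1$. It does not need the ``all $c$ except one $c-1$'' shape.

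So repaired, your proof is a valid alternative to the paper's. The paper takes the smallest site $\ell$ where the one-site marginals of $\tilde\ch$ and $\ch$ differ. For $\ell=1$ it adds $c-\tilde Q_1(0)$ arrivals with the $(1,2)$ bell silent. For $\ell\ge 2$ it fills sites $1,\ldots,\ell$ and uses one $(\ell-1,\ell)$ ring to create a discrepancy at site $\ell-1$, contradicting minimality. You replace that minimality-by-contradiction step with a direct construction on a window sealed off by silencing the $(n,n+1)$ bell, using the deficit as a monovariant. This makes explicit the filling step the paper calls easy. The price is that you must track the arrival-blocking asymmetry, which is exactly where your write-up slipped.
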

}

\edit{
  {\em Proof of Lemma~\ref{lem-strict-dom}.} Statement (ii) is a corollary of (i), so only (i) needs a proof.
Consider two stationary versions of the system process, $\tilde Q(\cdot)$ and $Q(\cdot)$,
corresponding to the (stationary) distributions $\tilde \ch$ and $\ch$, respectively. These two processes can be coupled so that 
$\tilde Q(t) \ge Q(t)$ at all times, and there exists $\ell\ge 1$ such that $\pr \{\tilde Q_\ell(t) > Q_\ell(t)\} > 0$.
Consider the smallest such $\ell$. 
}

\edit{
If $\ell=1$, then $\pr \{\tilde Q_1(0) > Q_1(0)\} > 0$, and considering the coupled processes evolution over (any) fixed time interval $[0,t]$, with $t>0$, we see that $\pr \{\tilde Q_1(t) =c  > Q_1(t)\} > 0\}$. (It suffices to consider the event when in $[0,t]$
there are exactly $c-\tilde Q_1(0)$ exogenous arrival bell rings at site $1$ and no bell rings for site pair $(1,2)$.) This implies lemma conclusion.
}

\edit{
 If $\ell \ge 2$, then 
$\pr \{\tilde Q_n(0) = Q_n(0),~ n < \ell; ~ \tilde Q_\ell(0) > Q_\ell(0)\} > 0$.
Then, fixing any $t>0$, we see that $\pr \{\tilde Q_n(t) = Q_n(t)=c,~ n < \ell; ~ \tilde Q_\ell(t) = c > Q_\ell(t)\} > 0$.
(Here we easily construct a positive probability event which ensures that the specified condition at time $t$ will hold.)
Then, at any time $t_2 > t$ we will have 
$\pr \{\tilde Q_{\ell-1}(t_2) = c > Q_{\ell-1}(t_2)\} > 0$.
(It suffices to consider the event when in $[t,t_2]$ there is exactly one bell ring for site pair $(\ell-1,\ell)$.)
But this leads to the contradiction with the minimality of $\ell$ and, therefore, the case $\ell \ge 2$ is impossible.
  $\Box$
  }

\subsection{Proof of Theorem~\ref{th-main-res}(i)}
\label{sec-main-proof-i}

\begin{lem}
\label{lem-blocking-prob}
For any $c\ge 1$ and $\alpha$, $\phi(\alpha,c) < \alpha$. Therefore, under LIM $\cl(\alpha,c)$, the probability that a new particle attempting to enter site 1 is ``blocked'' is $(\alpha-\phi(\alpha,c))/\alpha$.
\end{lem}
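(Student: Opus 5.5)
Since $\phi(\alpha,c)=\alpha\,\pr\{Q_1(\infty)<c\}$ under $\cl(\alpha,c)$, the statement $\phi(\alpha,c)<\alpha$ is equivalent to $\pr\{Q_1(\infty)=c\}>0$ under the LIM. The second sentence then follows at once: by PASTA, an arrival bell ring sees the stationary distribution, so the blocking probability is $\pr\{Q_1(\infty)=c\}=1-\phi(\alpha,c)/\alpha=(\alpha-\phi(\alpha,c))/\alpha$. The work is therefore in proving $\pr\{Q_1(\infty)=c\}>0$.

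I plan a local positive-probability argument. Take a stationary version of the process under $\cl(\alpha,c)$ and fix $t>0$. Whatever the state at time $0$, consider the event that in $[0,t]$ there are at least $c$ arrival bell rings and no bell rings for the pair $(1,2)$. This event has positive probability, and it is independent of $Q(0)$ because the driving Poisson processes on $[0,t]$ are independent of the initial state. On this event site 1 cannot lose particles during $[0,t]$. Each arrival ring adds a particle until $Q_1=c$, so after $c$ rings $Q_1=c$, and it remains $c$ until time $t$. Hence $\pr\{Q_1(t)=c\}\ge \pr\{\text{event}\}>0$. By stationarity, $\pr\{Q_1(\infty)=c\}=\pr\{Q_1(t)=c\}>0$, and so $\phi(\alpha,c)<\alpha$. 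This uses $\alpha>0$; for $\alpha=0$ the claim is degenerate ($0<0$ fails), so I would state the lemma for $\alpha\in(0,1]$, which is the paper's standing assumption.

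I do not expect a real obstacle. The one point to check is that in the one-sided infinite system only the local events at site 1 matter: particles leave site 1 only through $(1,2)$ bell rings and enter only through arrivals, so the infinite remainder of the system cannot affect $Q_1$ during $[0,t]$. This is immediate from the dynamics, so the argument needs none of the coupling constructions or Lemma~\ref{lem-strict-dom}.
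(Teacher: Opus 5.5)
Your proposal is correct and takes the same approach as the paper: reduce $\phi(\alpha,c)<\alpha$ to $\pr\{Q_1(\infty)=c\}>0$, and read off the blocking probability $(\alpha-\phi(\alpha,c))/\alpha$ from $\phi=\alpha\,\pr\{Q_1(\infty)<c\}$. The paper only calls $\pr\{Q_1(\infty)=c\}>0$ ``obvious.'' Your event, with at least $c$ arrival rings and no $(1,2)$ rings in $[0,t]$ and independent of $Q(0)$, supplies the justification it leaves implicit, and is the same kind of event the paper uses in the $\ell=1$ case of Lemma~\ref{lem-strict-dom}.
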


{\em Proof} follows from the fact that, obviously, $\pr\{Q_1(\infty)=c\}>0$. $\Box$

We see that $\phi(1/4,c) < 1/4$, and therefore $\alpha_c^* > 1/4$ for all $c\ge 1$.

\begin{lem}
\label{lem-rate-to-floor-c}
For a given $c \ge 2$ and $\alpha>0$, consider the coupled processes for $(\alpha,c-1)$- and $(\alpha,c)$-systems, as specified in 
Section~\ref{sec-c-coupling}. 

(i) For any $c\ge 2$, under LIM $\cl(\alpha,c)$, the rate $r_{2,in}(\alpha,c)$, as a function of $0 \le \alpha \le 1/2$, is lower bounded by 
a continuous function $s(\alpha,c)$, which is positive for $0<\alpha \le 1/2$.

(ii) 
For a fixed $1/4 < \alpha \le 1/2$, under LIM $\cl(\alpha,c)$, $\liminf_{c\to\infty} r_{2,in}(\alpha,c) >0$.
\end{lem}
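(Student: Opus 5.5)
For part (i) I would not compute the stationary law of site $1$. I would lower-bound, uniformly over stationary initial data, the probability of a finite-time event built from the driving Poisson processes.

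The starting point is a balance identity for site $1$ in the combined $(\alpha,c)$-system. Transitions $Q_1: c-1\to c$ occur at rate $\alpha\pr\{Q_1(\infty)=c-1\}$, which is exactly $r_{2,in}$. Transitions $c\to c-1$ occur at rate $\pr\{Q_1(\infty)=c,\,Q_2(\infty)<c\}$. By stationarity the two rates are equal, so it suffices to bound either $\pr\{Q_1=c-1\}$ or $\pr\{Q_1=c,Q_2<c\}$ from below.

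I also need a dominance fact. $\cl(\alpha,c)$ is dominated by $\Psi_{c-1}\cl(\alpha,1)$, and for $\alpha\le 1/2$ we have $\cl(\alpha,1)=\nu_\alpha$. So $Q(\infty)$ can be coupled with $Q'$, where $Q'_n=c-1+\xi_n$ and the $\xi_n$ are i.i.d.\ Bernoulli$(\alpha)$, with $Q(\infty)\le Q'$. In particular $\pr\{Q_2(\infty)<c\}\ge 1-\alpha\ge 1/2$, and $\pr\{Q_1=\cdots=Q_n=c\}\le\alpha^n$.

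For (i), start the process in stationarity at time $0$ and condition on $\{Q_2(0)<c\}$, which has probability at least $1-\alpha$. On $[0,1]$ require three things, all independent of $Q(0)$:
\begin{itemize}
\item no $(1,2)$ or $(2,3)$ bell rings, which has probability $e^{-2}$;
\item at least $c$ arrival bell rings, which has probability $\pi_c(\alpha)=\pr\{\mathrm{Poisson}(\alpha)\ge c\}$.
\end{itemize}
Then $Q_2$ cannot change and $Q_1$ is pushed up to $c$, so at time $1$ we have $Q_1=c$ and $Q_2<c$. By stationarity and the balance identity,
$$
r_{2,in}(\alpha,c)\ge s(\alpha,c):=(1-\alpha)e^{-2}\pi_c(\alpha).
$$
This $s$ is continuous in $\alpha$ and positive on $(0,1/2]$.

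For (ii), the bound above decays in $c$, so I would replace the "push up from below" event with one that starts from a full site $1$. Lemma~\ref{lem-blocking-prob} gives
$$
\pr\{Q_1(\infty)=c\}=(\alpha-\phi(\alpha,c))/\alpha\ge (\alpha-1/4)/\alpha=:p>0,
$$
because $\phi\le 1/4$; this bound is uniform in $c$. Choose $n_0$ with $\alpha^{n_0}<p/2$. By the dominance fact, with probability at least $p/2$ the state at time $0$ has $Q_1=\cdots=Q_n=c$ and $Q_{n+1}<c$ for some $n<n_0$. On this event, consider the following happening in a unit time interval:
\begin{itemize}
\item the bells for pairs $(n,n+1),(n-1,n),\ldots,(1,2)$ ring once each, in this order;
\item there are no other bell rings for pairs $(k,k+1)$ with $k\le n_0+1$, and no arrival bell rings.
\end{itemize}
Each ring then moves one particle: the site on the left has $c$ particles and the site on the right has at most $c-1$ at that moment. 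At the end, $Q_1=c-1$ exactly. The probability of this event is some $q(n_0)>0$ depending only on $n_0$, not on $c$. Hence $r_{2,in}(\alpha,c)=\alpha\pr\{Q_1(\infty)=c-1\}\ge\alpha\,(p/2)\,\min_{n<n_0}q(n)$, which is positive and independent of $c$. Taking $\liminf_{c\to\infty}$ gives (ii).

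The main obstacle is in (ii). The crude bound $\pr\{Q_1=c,Q_2<c\}\ge\pr\{Q_1=c\}-\pr\{Q_2=c\}$ may be negative, because site $1$ being full is correlated with site $2$ being full. The step that gets around this is controlling the length of the initial run of full sites using the Bernoulli majorant, uniformly in $c$. I need to check that the dominance coupling gives this uniformly over $\alpha\in(1/4,1/2]$. I also need to check that the pair rings in the cascade cannot be undone by earlier particle motions, which is why the order of the rings is chosen from right to left.
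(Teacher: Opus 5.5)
Your proof is correct, but it takes a somewhat different route from the paper, most notably in part (ii).

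In part (i), the paper bounds $\pr\{Q_1(\infty)=c-1\}$ directly. It starts from $\{Q_1(0)\le c-1\}$, which has probability at least $1-\alpha$ by domination by $\Psi_{c-1}\nu_\alpha$. It then asks for exactly $c-1-Q_1(0)$ arrivals and no $(1,2)$ ring in $[0,\tau]$, which gives the bound $\alpha(1-\alpha)\gamma(\alpha,c)$. You instead go through the flow-balance identity $\alpha\pr\{Q_1=c-1\}=\pr\{Q_1=c,\,Q_2<c\}$. This lets you require ``at least $c$ arrivals'' rather than an exact count, at the cost of also freezing site $2$. Both bounds are continuous in $\alpha$ and positive on $(0,1/2]$, so either one serves the later use in the proof of Theorem~\ref{th-main-res}(i). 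A cosmetic point: you announce three requirements on $[0,1]$ but list only two.

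In part (ii), the paper argues by contradiction. Along a subsequence with $\pr\{X_1(\infty)=1\}\to 0$, it passes to a limit in $\bar\Z_+^\infty$. It shows that the limiting vacancies at sites $1,\dots,\ell$ are either all $0$ or all $\infty$. It then contradicts the fact that long runs of full sites are unlikely under $\Psi_{c-1}\nu_\alpha$.

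You use essentially the same inputs: $\pr\{Q_1(\infty)=c\}\ge(\alpha-1/4)/\alpha$, and the $\alpha^{n_0}$ bound on runs of full sites. But you replace the compactness step with an explicit right-to-left cascade of rings that leaves exactly one vacancy at site $1$. This is more elementary. It makes visible the mechanism that the paper's induction step (``otherwise $\pr\{\tilde X_\ell=0\}=\delta$ could not hold'') leaves implicit. It also yields a concrete lower bound on $r_{2,in}(\alpha,c)$ that is uniform in $c$, which the paper's non-constructive argument does not provide.

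Neither of your two stated worries is a real issue:
\begin{itemize}
\item $\alpha$ is fixed in (ii), so only uniformity in $c$ matters, and $\alpha^{n_0}$ does not depend on $c$.
\item Once all arrivals and all other rings on pairs $(k,k+1)$ with $k\le n_0+1$ are suppressed, each ring in your right-to-left order sees $c$ particles on the left and at most $c-1$ on the right. So every prescribed move occurs, and nothing undoes it.
\end{itemize}
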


{\em Proof.} (i) The steady-state probability that in the $(\alpha,c)$-system floor $c$ of site 1 is vacant
is at least $1-\alpha$: $\pr\{Q_1^{(\le 2)}(\infty) \le c-1\} \ge 1-\alpha$.
\edit{($\cl(\alpha,c)$ is dominated by $\Psi_{c-1} \cl(\alpha,1)$, and $\cl(\alpha,1)=\nu_\alpha$ for $\alpha \le 1/2$.)}
Consider the stationary version of the process
(with stationary distribution being the LIM). Fix any $\tau>0$. Let $\gamma(\alpha,c)$ be the minimum, over $i=0,1,\ldots,c$, of the probabilities of the following events: in the interval $[0,\tau]$ there are exactly $i$ attempted exogenous arrivals to site $1$, and no attempts to move a particle from 
site 1 to site 2. Clearly, $\gamma(\alpha,c)$, as a function of $\alpha$, is continuous and positive for $0<\alpha \le 1/2$.
We obtain that $r_{2,in}(\alpha,c) = \alpha \pr\{Q_1^{(\le 2)}(\tau) = c-1\} \ge \alpha(1-\alpha)\gamma(\alpha,c)$.

(ii) 
Denote by $X_n(\infty)=c-Q_n^{(\le 2)}(\infty)$ the vacancy of site $n$ in $(\alpha,c)$-system. 
Consider site 1 vacancy $X_1(\infty)$. 
Uniformly in all $c \ge 1$, we have: $\pr\{X_1(\infty) \ge 1\} \ge 1-\alpha$;
$\pr\{X_1(\infty) \le 1\} \ge (\alpha-\phi(\alpha,c-1))/\alpha \ge (\alpha-1/4)/\alpha >0$. We need to show that
\beql{eq-vacancy-0}
\liminf_{c\to\infty} \pr\{X_1(\infty) = 1\}>0.
\eeql

\edit{We will prove \eqn{eq-vacancy-0} by contradiction.
Suppose $\liminf_{c\to\infty} \pr\{X_1(\infty) = 1\}=0$. 
Then we can find a subsequence of $c$ along which $X(\infty) \Rightarrow \tilde X$, where $\tilde X = (\tilde X_n, ~n\ge 1)$ is a random element 
taking values in $\bar \Z_+^{\infty}$, $\bar \Z_+ = \{0,1,2,\ldots\} \cup \{+\infty\}$ is one-point compactification of the set of non-negative integers
and $\Rightarrow$ means convergence in distribution, and moreover, for some fixed
$0<\delta<1$, $\pr\{\tilde X_1 =0\} = \delta$ and $\pr\{\tilde X_1 =1\} = 0$. Then, we necessarily have $\pr\{\tilde X_1 =k\} = 0$ for any $k\ge 1$;
because if not, $\lim_{c\to\infty} \pr\{X_1(\infty) = k\}>0$, which easily implies that $\lim_{c\to\infty} \pr\{X_1(\infty) = 1\}>0$. 
Therefore, 
\beql{eq-vacancy-1}
\pr\{\tilde X_1 =0\} = \delta, ~~ \pr\{\tilde X_1 =\infty\} = 1-\delta.
\eeql
Property \eqn{eq-vacancy-1} means that as $c\to\infty$ the distribution of vacancy $X_1(\infty)$ converges to the one taking value $0$ with probability $\delta$ and $+\infty$ with 
probability $1-\delta$.
But then, necessarily, the stronger property holds:
\beql{eq-vacancy-2}
\pr\{\tilde X_n =0, ~n \le \ell\} = \delta, ~~ \pr\{\tilde X_n =\infty, ~n \le \ell\} = 1-\delta, ~~~\forall \ell\ge 1.
\eeql
Property \eqn{eq-vacancy-2} is proved by induction. Suppose it holds for some $\ell$. Conditional on the event $\tilde X_n =\infty, ~n \le \ell$,  
event $\tilde X_{\ell+1} =\infty$ holds w.p.1 (from \eqn{eq-queue-cond}). Conditional on the event $\tilde X_n =0, ~n \le \ell$,  
event $\tilde X_{\ell+1} =0$ must hold w.p.1, because otherwise, recalling that distribution of $\tilde X$ is a limit of stationary distributions of $X(\cdot)$,
$\pr\{\tilde X_\ell =0\} = \delta$ could not hold. This proves \eqn{eq-vacancy-2}.}

From property  \eqn{eq-vacancy-2} we have
$$
\lim_c \pr\{X_n(\infty) =0, ~n \le \ell \} = \delta, ~~~~\forall \ell\ge 1,
$$
which implies that for an arbitrarily large fixed $\ell$, the probability that sites $1,\ldots,\ell$ are simultaneously completely occupied remains bounded away from $0$. This is, however, impossible, because: \edit{$\cl(\alpha,c)$ is dominated by $\Psi_{c-1} \cl(\alpha,1)$; $\cl(\alpha,1)=\nu_\alpha$ (because $\alpha \le 1/2$); and then under $\Psi_{c-1} \cl(\alpha,1)$ the probability 
of sites $1,\ldots,\ell$ being simultaneously occupied can be made arbitrarily small by making $\ell$ large.} The contradiction completes the proof.
 $\Box$

 {\em Proof of Theorem~\ref{th-main-res}(i).} Let us show that, for a fixed $c\ge 2$, $\alpha_c^* < \alpha_{c-1}^*$.
 Consider $\alpha<  \alpha_{c-1}^*$. We will show that $\phi(\alpha,c)=1/4$, when 
 $\alpha$ is close to $\alpha_{c-1}^*$.
 
 Consider processes for $(\alpha,c-1)$- and $(\alpha,c)$-systems, coupled as specified in Section~\ref{sec-c-coupling}.
 We can and do augment this coupling by the coupling of $(\alpha,c-1)$- and $(\alpha_{c-1}^*,c-1)$-systems,
 specified in Section~\ref{sec-alpha-coupling} (with $c-1$ replacing $c$, and $\alpha_{c-1}^*$ replacing $\hat\alpha$).
 Let us use the following terminology. We will call the class-1 particles, common for both couplings, ``black;'' the class-2 particles in the coupling of $(\alpha,c-1)$- and $(\alpha,c)$-systems, will be called ``blue;'' and the class-2 particles in the coupling of $(\alpha,c-1)$- and $(\alpha_{c-1}^*,c-1)$-systems, will be called ``red.'' The evolutions of the processes of black, blue and red particles are all coupled, but blue and red particles are oblivious of each others' existence. (The evolutions of blue and red particles are, of course, {\em not} independent.) Specifically, at each common arrival bell ring or an $(n,n+1)$-bell ring, the algorithms for the black-and-blue system (as in Section~\ref{sec-c-coupling})
 and for the black-and-red system (as in Section~\ref{sec-alpha-coupling}) are performed separately; obviously, both algorithms will result 
 in the same movement of black particles, and therefore there is a single common black particles' process. 
 
  Let us further augment our coupling construction. Consider ``purple'' particles, which are ``derived'' from blue particles by ``impeding'' the movement 
 of the latter as follows. When a blue particle is added to site $1$, a new purple particle is placed into an artificial infinite capacity First-In-First-Out
  queue at the system entrance,
 say at an artificial site $0$. A purple particle from this entrance queue can actually enter the system at site $1$ 
 only ``within'' a red particles entering the system at site $1$. (Note that a purple particle can never be placed into the entrance queue at the same time as a red particle enters the system.) Within the system, assume that red particles containing purple particles
 have priority over red particles not containing purple. (Formally, we replace black-and-red system by the system with three priority classes -- black, red-with-purple, and red-without-purple, with the latter being ``class 3".) 
  
 Suppose, the system starts from the empty state.
 Observe the following: 
 all purple particles at sites $n \ge 1$ always stay within red particles; 
 the combined black-and-purple system at sites $n \ge 1$ complies with the back-pressure rule; 
 then, by Lemma~\ref{lem-ahead}, w.p.1, the combined black-and-purple process ``stays behind'' the combined black-and-blue process
 in the sense of order $\preceq$ at all times; since the black process is common, we also have that purple process stays behind blues process
 in the sense of $\preceq$.

 Note that the
 steady-state flux due to red particles is exactly  $1/4-\phi(\alpha,c-1)$; this is because it is equal to the
 additional (to black particles)
 steady-state flux they contribute in black-and-red system, and the latter system has flux  $\phi(\alpha_{c-1}^*,c-1)=1/4$ by definition.
 Denote by $r_{red,in}$ and $r_{blue,in}$ the steady-state rates at which red and blue particles actually get into the system.
 Obviously, $r_{red,in} \le \alpha_{c-1}^*-\alpha$. By Lemma~\ref{lem-rate-to-floor-c}(i), we can choose $\alpha$ sufficiently close to $\alpha_{c-1}^*$,
 so that 
 \beql{eq-in-rates}
 r_{red,in} < s(\alpha,c) \le r_{blue,in}.
 \eeql
 
 Let us go back to the process starting from the empty state. Given \eqn{eq-in-rates}, we see that, w.p.1, as $t\to\infty$
 the entrance queue of purple particles increases to infinity and, therefore, the total number of red particles not containing purple particles,
 that will ever exist, is finite. This means that the long-run average flux of purple particles (equal to their steady-state flux under the LIM)
 is equal to that of red particles, which is $1/4-\phi(\alpha,c-1)$. Then the long-run average flux of combined black and purple particles is $1/4$.
 Then $\phi(\alpha,c)$, which is the long-run average flux of combined black and blue particles, is at least $1/4$, and then equal to $1/4$.
 This completes the proof of $\alpha_c^* < \alpha_{c-1}^*$.
 
  Let us show that $\alpha_c^* \downarrow 1/4$. Suppose not, and $\alpha_c^* \downarrow \alpha > 1/4$. Then, by
 Lemma~\ref{lem-rate-to-floor-c}(ii), we could choose $c$ large enough so that,
 for the coupling construction above in the proof, 
  $r_{red,in} \le \alpha_{c-1}^*-\alpha < r_{blue,in} =r_{2,in}(\alpha,c)$.
 Therefore, by the same argument as
 in the proof above,  $\phi(\alpha,c)=1/4$ -- a contradiction.
  $\Box$
  
  \subsection{Proof of Theorem~\ref{th-main-res}(ii)}
  \label{sec-main-proof-ii}
    
\edit{
Fix any $\alpha < \alpha_{c-1}^*$. 
Let us show that the effective floor of $\cl(\alpha,c)$ is $1$.
It suffices to show this for $\alpha \in (\alpha_c^*,\alpha_{c-1}^*)$.
Suppose not, that is the effective floor of $\cl(\alpha,c)$ is at least $2$;
this means that floor $1$ is completely occupied. 
Then $\cl(\alpha,c) = \Psi_1 \cl(\alpha,c-1)$, which implies $\phi(\alpha,c) = \phi(\alpha,c-1)$.
But this is impossible because $\phi(\alpha,c)=1/4$ and $\phi(\alpha,c-1)<1/4$.
 }
  
  \edit{
  Let us show that $\cl(\alpha,c)=\Psi_1 \cl(\alpha,c-1)$ for $\alpha \ge \alpha_{c-1}^*$. These two measures have the same flux $1/4$, and 
  $\Psi_1 \cl(\alpha,c-1)$ dominates $\cl(\alpha,c)$. Then they are equal by Lemma~\ref{lem-strict-dom}(ii). $\Box$
  }

  \subsection{Proof of Lemma~\ref{lem-phi-props}}
  \label{sec-phi-props-proof}
  
   Fix $c$. We already know that $\phi(\alpha,c)$ is non-decreasing Lipschitz continuous in 
  $\alpha$ in $[0,\alpha^*_c]$. Fix any $\alpha < \alpha^*_c$. By definitions, $\phi(\alpha,c) < 1/4$ and $\phi(\alpha^*_c,c) = 1/4$.
  Fix any $\hat \alpha \in (\alpha,\alpha^*_c)$. Consider a system with three particle classes, $1$, $2$, and $3$, with decreasing priorities.
  Class-1 particles have the attempted exogenous arrival rate $\alpha$ at site $1$; class-2 and class-3 particles' attempted arrival rates
  are $\hat \alpha-\alpha$ and $\alpha^*_c - \hat \alpha$, respectively. Given the priority order, in steady-state an arriving class-2 particle has at
  least as high probability of being accepted and never leaving the system than a class-3 particle. Therefore,
  $$
  \frac{\phi(\hat \alpha,c)-\phi(\alpha,c)}{\hat \alpha - \alpha} \ge \frac{\phi(\alpha^*_c,c)-\phi(\alpha,c)}{\alpha^*_c - \alpha} > 0,
  $$
  and then $\phi(\hat \alpha,c) > \phi(\alpha,c)$. Then, same argument shows the concavity, if we consider any three attempted arrival rates,
  $\alpha < \hat \alpha < \alpha'$, with $\alpha' \le \alpha^*_c$.
  
  For a fixed $\alpha>0$, let us show that $\phi(\alpha,c)<1/4$ implies $\phi(\alpha,c) < \phi(\alpha,c+1)$. This is equivalent to showing that,
   in the system with $c+1$ floors, 
   the flux of 
  $\Psi_1 \cl(\alpha,c)$,
  which has effective floor $2$, 
  is strictly smaller than the flux of $\cl(\alpha,c+1)$, whose effective floor is $1$. \edit{This is true by Lemma~\ref{lem-strict-dom}(i),
  because $\Psi_1 \cl(\alpha,c)$ dominates $\cl(\alpha,c+1)$ and they are not equal.}
    
  For a fixed $\alpha >1/4$, we immediately see that $\phi(\alpha,c)$ is strictly increasing in $c$ up to the point $c=c_\alpha$.
  
  Finally, consider the dependence of $\phi(\alpha,c)$ on $c$ when $\alpha \le 1/4$ is fixed. Clearly, $\phi(\alpha,c) < \alpha \le 1/4$ for all $c$,
  because the probability of an arriving particle being blocked is always positive. Then, it suffices to show that, for a fixed $\alpha<1/4$,
  $\phi(\alpha,c) \uparrow \alpha$ as $c \uparrow \infty$. It is shown in  \cite{St2011-tandem-queues} that, when $\alpha<1/4$, the system,
  where there is no limit on the number of particles at any site, has a proper LIM -- let us denote it $\cl(\alpha,\infty)$ -- with the number of particles at 
  any site being finite a.s. 
  By monotonicity, $\cl(\alpha,\infty)$  dominates $\cl(\alpha,c)$
  for any $c$. This means that the steady-state probability of site $1$ being completely occupied (having $c$ particles) vanishes as $c\to\infty$.
  This implies that the blocking probability vanishes, and then $\phi(\alpha,c) \uparrow \alpha$.
 $\Box$

  \section{Left- and right-side asymptotic limits (special case)} 
  \label{sec-left-right}
  
    In Section~\ref{sec-model-one-side} we analyzed the LIM $\cl(\alpha,c)$ of the one-sided system; this LIM is equal to the left-side limit, as $N\to\infty$ of stationary distributions of the system with $\alpha\le 1$ and $\beta = 1$ (in fact, $\beta \ge 1$). In this section
we will prove Theorem~\ref{th-special-max-flux}, which gives both the left- and right-side stationary distribution limits 
in the case when $1/2 \le \beta \le 1$ and the maximum possible limiting flux $1/4$ is achieved.
In particular, the theorem shows that if $\alpha \ge \alpha^*_c$ (i.e., $c \ge c_\alpha$),
the left-side asymptotic limit for any $\beta\ge 1/2$ is exactly same as for $\beta=1$,
i.e. equal to $\cl(\alpha,c)$.
    
We need more notation. 
Recall that the right-side asymptotics is such that the sites are (re-)labeled from right to left, so that, under this labeling, particles move ``left'' and leave system from site $1$, while holes move ``right'' at enter the system at site $1$. For a distribution $\ch$, we will denote by $\ch^{\updownarrow}$ the same distribution,
 but with particles and holes switched. So, for example, $[\cl(\beta,c)]^{\updownarrow}$ is the LIM (lower - in terms of holes) for one-sided system with $c$ floors and with holes entering (at site $1$) at the attempted rate $\beta$.
 
 Before stating Theorem~\ref{th-special-max-flux}, we recall that for the single-floor system the left- and right-side stationary distribution limits are well known,
 as summarized in the following
 
  \begin{prop}[Corollary from corollary 3.17 in \cite{Liggett-1975}] 
 \label{prop-single-floor}
 Consider a finite single-floor system (which is a standard TASEP).
 WLOG, let $\alpha \le \beta$.  Then, as $N\to\infty$:\\
   if $\alpha \le 1/2$, the left-side asymptotic limit of stationary distribution is $\nu_\alpha$ and the right-side asymptotic limit of stationary distribution behaves like $\nu_\alpha$ at $\infty$; \\
 if $\alpha > 1/2$, both the left-side and right-side asymptotic limits of stationary distribution behave like $\nu_{1/2}$ at $\infty$.
 \end{prop}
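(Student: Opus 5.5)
This is a reformulation of Liggett's corollary 3.17 in \cite{Liggett-1975}, so the plan is to translate that result into our notation and fill in the bookkeeping. Liggett treats the finite TASEP on $\{1,\ldots,N\}$ with injection rate $\alpha$ at the left and ejection rate $\beta$ at the right. He identifies the limits of the stationary measures $\mu_N$ as $N\to\infty$, viewed both from the left boundary and from the right boundary. In the phase $\alpha\le 1/2$, $\alpha\le\beta$ (low density), the limit from the left is the product measure $\nu_\alpha$. In the phase $\alpha,\beta\ge 1/2$ (maximal current), the limits from both sides are the boundary-modified measures that converge to $\nu_{1/2}$ away from the boundary.

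First, I would check that the model with $c=1$ is exactly Liggett's finite system with the same boundary rates; here the BP rule is simply exclusion. I would also check that our ``left-side centering'' matches his limit taken from the left end. For the right-side centering, I would use the particle--hole symmetry $\updownarrow$ combined with the reflection $n\mapsto N-n+1$. This map sends the $(\alpha,\beta)$-system to the $(\beta,\alpha)$-system with the roles of particles and holes exchanged. It therefore reduces every right-side statement to a left-side statement for the swapped parameters. The swap is also why the assumption $\alpha\le\beta$ loses no generality.

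Next I would go through the cases. For $\alpha\le 1/2$ with $\alpha\le\beta$: the left-side limit $\nu_\alpha$ is read off directly from Liggett. For the right-side limit, I would use the reflected system, in which holes enter at rate $\beta\ge\alpha$ and particles leave at rate $\alpha$. Liggett's result for the boundary at the slow end gives a limit that is not a product measure but behaves like $\nu_\alpha$ at infinity. This is exactly the convergence of shifts required by our definition. The boundary case $\alpha=\beta<1/2$ is the coexistence line. There I would double-check that the claimed right-side statement (behaving like $\nu_\alpha$ at $\infty$) still follows from Liggett's corollary, which does cover $\alpha=\beta$. For $\alpha>1/2$, we have $\beta\ge\alpha>1/2$, which is the maximal current phase. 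Both one-sided limits are the $\nu_{1/2}$-at-infinity measures, and by the symmetry it suffices to handle the left one.

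The main obstacle is the convention-matching: confirming that ``behaves like $\nu_\gamma$ at $\infty$'' agrees with Liggett's formulation, namely weak convergence of the $\ell$-shifts of the limiting measure. It also requires confirming that Liggett's statements indeed cover $\beta\le 1$ and the equality cases. No new probabilistic argument should be needed beyond the symmetry reduction.
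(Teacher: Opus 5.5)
Your route is the paper's route. The paper gives no argument beyond citing Liggett's Corollary~3.17, and your dictionary is the right bookkeeping: $\lambda=\alpha$, $\rho=1-\beta$, left-side centering read as Liggett's limit from the left end, and reflection combined with particle/hole exchange. That last map turns the right-side limit of the $(\alpha,\beta)$-system into the left-side limit, read in holes, of the $(\beta,\alpha)$-system. For $\alpha<\beta$ this yields the claim. If $\alpha<1/2$, the swapped system is in its high-density phase, so its left limit has hole density $1-\alpha$, i.e.\ particle density $\alpha$. If $\alpha=1/2<\beta$, it is in the maximal-current phase.

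The step that fails is the one you deferred. On the coexistence line $\alpha=\beta<1/2$, the right-side claim cannot be extracted from Liggett, because it is false as written.

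\begin{itemize}
\item With $\alpha=\beta$, your symmetry map sends the system to itself and exchanges the two centerings.
\item So the right-side limit, read in terms of holes, equals the left-side limit, which is $\nu_\alpha$ by the first half of the statement.
\item Read in terms of particles (the reading under which the claim is right for $\alpha<\beta$), the right-side limit is $[\nu_\alpha]^{\updownarrow}=\nu_{1-\alpha}\ne\nu_\alpha$.
\end{itemize}

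So the two halves of the statement are incompatible with the symmetry unless $\alpha=1/2$. Physically, the shock position is macroscopically uniform, so the density is $\alpha$ near the left end and $1-\beta=1-\alpha$ near the right end.

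The right-side assertion must therefore be restricted to $\alpha<\beta$, plus the case $\alpha=\beta=1/2$. In that case $\alpha+\beta=1$ and the finite-$N$ stationary law is exactly $\nu_{1/2}$. Alternatively, for $\alpha=\beta<1/2$, state it from the holes' point of view. The paper's ``WLOG $\alpha\le\beta$'' carries the same slip. It is harmless there, since the later use in Theorem~\ref{th-special-max-flux} only involves rates $\ge 1/2$.

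A smaller inaccuracy: the low-density right-side limit is not always non-product. On the line $\alpha+\beta=1$ the finite-$N$ stationary law is exactly $\nu_\alpha$. This does not affect the ``behaves like $\nu_\alpha$ at $\infty$'' conclusion.
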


  \begin{thm}
  \label{th-special-max-flux}
  Consider a finite system with the number of floors $c\ge 1$ and $1/2 \le \beta \le 1$. Suppose $\alpha \ge \alpha^*_c$, i.e. $c \ge c_\alpha$, 
  and let $\ell = c-c_\alpha$. 
   Then, as $N\to\infty$, 
  the steady-state flux converges to $\phi(\alpha,c)=1/4$ and the left-side asymptotic limit $\ch_l$ of the stationary distribution is $\cl(\alpha,c)=
  \Psi_\ell \cl(\alpha,c-\ell)$.
  In addition,
  the right-side asymptotic limit $\ch_r$ of the stationary distribution (from the point of view of holes) is
  $\ch_r=[\cl(\beta,c)]^{\updownarrow}=[\Psi_{c-1} \cl(\beta,1)]^{\updownarrow}$
  (which, by Proposition~\ref{prop-single-floor},
  means that $\ch_r$ has flux $1/4$ and behaves like $[\Psi_{c-1} \nu_{1/2}]^{\updownarrow}$ at $\infty$.)
  \end{thm}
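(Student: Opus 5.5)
Throughout, let $\pi_N$ be the stationary distribution of the finite $N$-site system with parameters $(\alpha,\beta,c)$, and $\pi_N^{(1)}$ the one with $\beta$ replaced by $1$. The plan is to squeeze $\pi_N$ between two comparison systems whose $N\to\infty$ limits we already know. We treat the left side (particles entering at rate $\alpha$) and the right side (holes entering at rate $\beta$) symmetrically, using the particle--hole symmetry $\updownarrow$. Under $\updownarrow$ the finite system with parameters $(\alpha,\beta,c)$ becomes the finite system with parameters $(\beta,\alpha,c)$, viewed from the right.

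\emph{Step 1 (upper bound on the flux).} Basic monotonicity, applied to the departure process, shows that decreasing $\beta$ only increases the state. Thus $\pi_N$ dominates $\pi^{(1)}_N$, and similarly with roles reversed for holes. The flux $\phi_N$ of $\pi_N$ is bounded as follows:
\begin{itemize}
\item it is at most the flux of the system with $\beta$ replaced by $1$, which tends to $\phi(\alpha,c)$;
\item by the symmetric argument on holes, it is at most the flux of the system with $\alpha$ replaced by $1$, which tends to $\phi(\beta,c)$.
\end{itemize}
I would check this comparison through the impeding coupling of Lemma~\ref{lem-ahead}, which shows that more arrivals never give less throughput. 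Since $\beta\ge 1/2\ge\alpha^*_c$, we have $\phi(\beta,c)=1/4$, and by Theorem~\ref{th-main-res}(ii) $\cl(\beta,c)=\Psi_{c-1}\cl(\beta,1)$. Hence $\limsup\phi_N\le 1/4$ trivially.

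\emph{Step 2 (lower bound on the flux; the main obstacle).} We must show $\liminf\phi_N\ge 1/4$. I would build a coupled system in which the finite $(\alpha,\beta,c)$ system is compared with the system having $\beta=1$. The latter has flux $\to 1/4$, and by Theorem~\ref{th-main-res}(ii) its left part lives on effective floor $c-\ell$ with full floors below. The difficulty is that the rate-$\beta$ exit can produce a traffic jam propagating leftward.

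The idea is that on the top active floor near the right boundary, the dynamics are exactly a single-floor TASEP with hole injection rate $\beta\ge 1/2$. By Proposition~\ref{prop-single-floor}, such a TASEP in the maximal-current phase has flux $1/4$ for any input rate $\ge 1/2$. Concretely, I would use a priority coupling (Section~\ref{sec-priority-coupling}) from the hole side. Regard holes as particles entering at site $N$ at rate $\beta$. Holes on floor $c$ of the right region form a single-floor TASEP, and the left supply is at least as large as that of an injection at rate $\ge1/2$. I would formalize ``supply $\ge$ that of rate $1/2$'' by placing the left boundary of the floor-$c$ hole zone with an intermediate TASEP (rate $1$ holes in, i.e.\ one-sided LIM) and applying Lemma~\ref{lem-ahead} to hole realizations.

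Making this rigorous seems hardest: one must control the random front between floors. A safer fallback is contradiction with Lemma~\ref{lem-strict-dom}. Take a subsequential left limit $\ch_l$ of $\pi_N$. It is stationary for the one-sided system and dominates $\cl(\alpha,c)$. If its flux were $<1/4$, then the blocking at site $1$ would be strictly larger. Symmetrically, a right limit $\ch_r$ would dominate, in holes, $\cl(\beta,c)^\updownarrow$, and flux conservation (both limits carry $\lim\phi_N$) would force both equalities. One must still rule out flux $<1/4$ in both simultaneously. For this I would use the hole-side bound: $\ch_r$ is stationary for the hole-entering system with rate $\beta$ on floor structure $\Q^c$, and is sandwiched between $\cl(\beta,c)^\updownarrow$ and the measure with all floors full, $\Psi_{c-1}$ applied to the single-floor hole TASEP. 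Here Proposition~\ref{prop-single-floor} gives flux $1/4$, since $\beta\ge1/2$.

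\emph{Step 3 (identifying limits).} Once $\lim\phi_N=1/4$, any subsequential left limit $\ch_l$ is a stationary measure of the one-sided $(\alpha,c)$ system dominating $\cl(\alpha,c)$, with equal flux $1/4$. Lemma~\ref{lem-strict-dom}(ii) then gives $\ch_l=\cl(\alpha,c)=\Psi_\ell\cl(\alpha,c-\ell)$, the last equality by Theorem~\ref{th-main-res}(ii), and so the full sequence converges. Identically on the hole side, with the analogue of Lemma~\ref{lem-strict-dom} for holes entering at rate $\beta$, we get $\ch_r=[\cl(\beta,c)]^\updownarrow=[\Psi_{c-1}\cl(\beta,1)]^\updownarrow$. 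Proposition~\ref{prop-single-floor} then yields the behavior at $\infty$.
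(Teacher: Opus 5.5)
\textbf{The gap: Step~2, the lower bound on the flux.}

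Your Steps~1 and~3 are sound and agree with the paper. The bound $\limsup_N\phi_N\le 1/4$ is trivial, and once $\lim_N\phi_N=1/4$ is known, Lemma~\ref{lem-strict-dom}(ii) identifies $\ch_l$ and $\ch_r$ exactly as you describe. The gap is Step~2, the bound $\liminf_N\phi_N\ge 1/4$, which is the real content of the theorem.

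Your first plan never says how the supply of particles from the left onto the relevant floor is controlled, so it is not yet an argument. Your fallback fails as stated. It uses only $\beta\ge 1/2$ and the fact that $\ch_r$ dominates $[\cl(\beta,c)]^{\updownarrow}$; it never uses the hypothesis $\alpha\ge\alpha^*_c$, so it would prove too much. Take $c=1$, $\beta=1$, $\alpha=0.1$. The right-side limit is stationary for the hole system with entry rate $1$ and dominates $[\cl(1,1)]^{\updownarrow}$. Yet it has particle density $0.1$ far from the boundary and flux $0.09$.

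Proposition~\ref{prop-single-floor} describes limits of finite systems and depends on both boundary rates. It says nothing about an arbitrary invariant measure of the one-sided single-floor TASEP. With entry rate $\ge 1/2$, such measures can have hole density $\rho>1/2$ at infinity and flux $\rho(1-\rho)<1/4$. The upper end of your sandwich is also not a usable measure. The all-hole configuration gives nothing. ``$\Psi_{c-1}$ applied to the single-floor hole TASEP'' only restates that $\ch_r$ lives on the top hole floor. If you meant $[\Psi_{c-1}\cl(1,1)]^{\updownarrow}$, then showing that $\ch_r$ is dominated by it is precisely what needs proof, and it is false without $\alpha\ge\alpha^*_c$ (the example above).

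\textbf{The missing idea: bring in $\alpha\ge\alpha^*_c$ through the system with $\beta=1$.}

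For $\beta=1$, Theorem~\ref{th-main-res} gives limiting flux $\phi(\alpha,c)=1/4$. So every right-side limit of $\pi^{(1)}_N$ is a hole-stationary measure on the top hole floor, with flux $1/4$, dominating $[\Psi_{c-1}\cl(1,1)]^{\updownarrow}$. By Lemma~\ref{lem-strict-dom}(ii) it equals $[\Psi_{c-1}\cl(1,1)]^{\updownarrow}$.

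The paper then runs the $\beta$-system starting from $\pi^{(1)}_N$. The hole configuration is stochastically non-increasing in time, so the exit flux $\beta\,\pr\{Q_N(t)>0\}$ is non-decreasing in $t$ and tends to $\phi_N$. Over a fixed horizon $[0,T]$ and near the right end, the dynamics is, for large $N$, close to the lifted single-floor process of Lemma~\ref{lem-1floor-dyn}(ii) with entry rate $\beta$. That process has flux at least $1/4-\epsilon$ by time $T$, which gives $\liminf_N\phi_N\ge 1/4$.

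Your sandwich can also be repaired statically with the same input:
\begin{itemize}
\item As you note in Step~1, $\pi_N$ is dominated by $\pi^{(1)}_N$ from the point of view of holes. Hence every $\ch_r$ lies between $[\Psi_{c-1}\cl(\beta,1)]^{\updownarrow}$ and $[\Psi_{c-1}\cl(1,1)]^{\updownarrow}$.
\item Both bounds behave like $[\Psi_{c-1}\nu_{1/2}]^{\updownarrow}$ at $\infty$ (Lemma~\ref{lem-1floor-dyn}(i)). So in the monotone coupling, $\ch_r$ agrees with the upper measure on any pair of far-out sites with probability tending to $1$.
\item A stationary measure carries the same flux across every bond, so $\ch_r$ has flux $1/4$.
\end{itemize}

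A minor slip: by Theorem~\ref{th-main-res}(ii), the effective floor of $\cl(\alpha,c)$ is $\ell+1=c-c_\alpha+1$, not $c-\ell$.
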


For the proof of Theorem~\ref{th-special-max-flux}, we will need the following

\begin{lem}
\label{lem-1floor-dyn}
Consider one-sided system with $c=1$ (single-floor TASEP), with $\alpha \in [1/2,1]$. Then:

(i) $\cl(\alpha,1)$  stochastically dominates $\nu_{1/2}$, and behaves like $\nu_{1/2}$ at $\infty$ (and, in particular, $\phi(\alpha,1)=1/4$). [This is a corollary from \cite[theorems 1.8(a) and 1.7]{Liggett-1975}.]

(ii) Suppose, the initial distribution at time $0$ is $\cl(1,1)$. Then, starting this initial state, the process $Q(t)$ is stochastically monotone non-increasing in time $t\ge 0$.  As $t\to\infty$ the distribution of $Q(t)$ converges to $\cl(\alpha,1)$. Moreover, the instantaneous flux into site $1$ 
(namely, $\alpha \times \pr\{Q_1(t)=0\}$) is non-decreasing in time, converging to $1/4$ as $t\to\infty$. In particular, for any $\epsilon>0$ we can choose $T=T(\epsilon)>0$ sufficiently large so that the instantaneous flux at times $t\ge T$ is at least $1/4-\epsilon$.
\end{lem}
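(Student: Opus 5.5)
Part (i) is quoted from \cite[theorems 1.8(a) and 1.7]{Liggett-1975}, so the only thing to add is the flux identity. Since $\cl(\alpha,1)$ behaves like $\nu_{1/2}$ at $\infty$, the stationary flux $\pr\{Q_n(\infty)=1,Q_{n+1}(\infty)=0\}$ does not depend on $n$. Letting $n\to\infty$ therefore gives $\phi(\alpha,1)=\frac12\cdot\frac12=1/4$.

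For (ii), the plan rests on basic monotonicity. Write $\ch=\cl(1,1)$ and start the $\alpha$-process from $Q(0)\sim\ch$.

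\emph{Monotonicity in time.} Use the coupling of Section~\ref{sec-alpha-coupling}, run with roles reversed: the $(\alpha,1)$-process is the class-1 (high-priority) part and the $(1,1)$-process is the combined system. If both start from a common state distributed as $\ch$, with all particles initially class~1 except that the $(1,1)$-system is stationary, then at every time the $(\alpha,1)$-state is dominated by the $(1,1)$-state. The latter has distribution $\ch$ for all $t$. Hence $Q(s)$ is dominated by $\ch$, the distribution of $Q(0)$, for every $s\ge 0$. By the Markov property and basic monotonicity (order preservation under the common driving Poisson processes), we get that $Q(t+s)$ is dominated by $Q(t)$ for all $t,s\ge0$. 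So $Q(t)$ is stochastically non-increasing.

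\emph{Identification of the limit.} Monotone bounded sequences of distributions on the compact space $\Q^1$ converge weakly, so $Q(t)\Rightarrow\ch^*$ for some $\ch^*$. By Feller continuity of TASEP, $\ch^*$ is stationary for the $\alpha$-process. Moreover $\ch^*$ dominates $\cl(\alpha,1)$: the empty initial state is below $Q(0)$, and coupling the two evolutions gives that $Q(t)$ dominates the process started from empty, whose distribution converges to $\cl(\alpha,1)$. By (i), $\cl(\alpha,1)$ has flux $1/4$. Every stationary distribution has flux at most $1/4$; this is the argument cited from \cite{St2011-tandem-queues}, valid because $\ch^*$ has effective floor $1$ when $c=1$. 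Domination together with Lemma~\ref{lem-strict-dom}(i) gives $\pr_{\ch^*}\{Q_1=1\}\ge\pr_{\cl(\alpha,1)}\{Q_1=1\}$, so the flux of $\ch^*$ is at most $1/4$.

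\textbf{Main obstacle: showing $\ch^*$ has flux exactly $1/4$.} The natural route is to use that $\ch^*$ dominates $\nu_{1/2}$. This holds because $\ch$ dominates $\nu_{1/2}$ by (i), and $\nu_{1/2}$ is invariant for the $\alpha$-process when $\alpha\ge1/2$. Indeed, the product measure $\nu_{1/2}$ is invariant for the one-sided TASEP with entrance rate $1/2$, and a larger entrance rate only pushes the process up, so $Q(t)$ stays above the process started from $\nu_{1/2}$ with rate $1/2$, which is stationary. Far to the right, the domination by $\ch$ (which behaves like $\nu_{1/2}$) and the domination of $\nu_{1/2}$ force $\ch^*$ to behave like $\nu_{1/2}$ at $\infty$. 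Then the same $n\to\infty$ computation as in (i) gives flux $1/4$. Lemma~\ref{lem-strict-dom}(ii) now yields $\ch^*=\cl(\alpha,1)$.

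\emph{Flux statement.} Because $Q_1(t)$ is stochastically non-increasing, $\pr\{Q_1(t)=0\}$ is non-decreasing. Hence the instantaneous flux $\alpha\pr\{Q_1(t)=0\}$ is non-decreasing, and it converges to $\alpha\pr_{\cl(\alpha,1)}\{Q_1=0\}=\phi(\alpha,1)=1/4$ by weak convergence. Monotone convergence to $1/4$ immediately gives $T(\epsilon)$ with flux at least $1/4-\epsilon$ for all $t\ge T$.
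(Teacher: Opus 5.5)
Your proof is correct and follows the paper's route: time-monotonicity from $\alpha\le 1$ and stationarity of $\cl(1,1)$, a sandwich argument showing the limit behaves like $\nu_{1/2}$ at $\infty$ and hence has flux $1/4$, identification via Lemma~\ref{lem-strict-dom}(ii), and monotonicity of $\pr\{Q_1(t)=0\}$ for the flux claim. The differences are only cosmetic. Your separate comparison with a rate-$1/2$ process is unnecessary, since you already have $\ch^*$ dominating $\cl(\alpha,1)$, which behaves like $\nu_{1/2}$ at $\infty$; this is exactly the paper's lower bound. Also, the phrase ``$\nu_{1/2}$ is invariant for the $\alpha$-process'' is false for $\alpha>1/2$, but the comparison argument you actually give does not rely on it.
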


  {\em Proof of Lemma~\ref{lem-1floor-dyn}(ii).} The monotonicity of the distribution of $Q(t)$ is obvious, because $\alpha \le 1$ and $\cl(1,1)$ is stationary for $\alpha=1$.
  To show convergence of the distribution to $\cl(\alpha,1)$, note that  at any time $t\ge 0$,
the distribution of $Q(t)$ dominates $\cl(\alpha,1)$ and is dominated by $\cl(1,1)$. Both $\cl(\alpha,1)$ and $\cl(1,1)$ behave like $\nu_{1/2}$ at $\infty$.
\edit{Then, the  limit $\tilde \ch$ of the distribution of $Q(t)$ is stationary, dominates $\cl(\alpha,1)$ and has the same flux $1/4$.
By Lemma~\ref{lem-strict-dom}(ii), $\tilde \ch =\cl(\alpha,1)$.}

The instantaneous flux $\alpha \times \pr\{Q_1(t)=0\}$ is non-decreasing because $\pr\{Q_1(t)=0\}$. And it must converge to $1/4$, which is the flux of the limiting distribution $\cl(\alpha,1)$. 
  $\Box$

  {\em Proof of Theorem~\ref{th-special-max-flux}.} Consider first the case $\ell=0$.
  
    We start with the special case $\beta=1$. In this case,  by Theorem~\ref{th-main-res}, $\ch_l=\cl(\alpha,c)$, which has effective floor $1$ and flux $\phi(\alpha,c)=1/4$.
  Consider any distribution 
  $\ch_r$, which is a subsequential limit under the right-side asymptotics. From the point of view of holes, $\ch_r$ is a stationary distribution of the one-sided process, with holes entering at attempted rate $\beta=1$, and it has flux $1/4$; 
  $\ch_r$ has effective floor $c$ (from the point of view of holes), simply because $\beta=1 \ge 1/2$ (and $\alpha\le 1$).
Let $\hat \ch_r$ be defined via $\ch_r = \Psi_{c-1} \hat \ch_r$.
Then, 
  $\hat \ch_r$  is a stationary distribution of the one-sided single-floor process (standard TASEP), with holes entering at attempted rate $\beta=1$,
  and with flux $1/4$.
  Then, from the point of view of holes, $\hat \ch_r$ stochastically dominates $[\cl(1,1)]^{\updownarrow}$, which, recall, also has flux $1/4$. 
  \edit{By Lemma~\ref{lem-strict-dom}(ii), $\hat \ch_r = [\cl(1,1)]^{\updownarrow}$.}

Now, consider the more general case of $\beta \in [1/2,1]$. 
Let us first find the  right-side asymptotic limit $\ch_r$.
Suppose the initial distribution of the process at time $0$ is the stationary distribution of the system with parameter $\beta=1$.
Now, consider the right-side asymptotics, from the point of view of holes.
We know from the paragraph just above, that from the point of view of holes, the initial distribution converges to $[\Psi_{c-1} \cl(1,1)]^{\updownarrow}$ as $N\to\infty$.
Given the specified initial distribution in a system with finite $N$,
 from the point of view of holes,
the distribution will be stochastically non-increasing in time $t$, while always dominating the stationary distribution,
and with the instantaneous flux at site $1$  (under right-side asymptotics) non-decreasing in time. 
Fix an arbitrarily small $\epsilon>0$ and then the corresponding 
$T=T(\epsilon)$ as in Lemma~\ref{lem-1floor-dyn}(ii) with $\alpha=\beta$. 
Then, for any fixed $M>0$, for all sufficiently large $N$, the evolution of the process at sites $1,\ldots,M$ (under the right-side asymptotics) on floor $c$, 
if we view it as a single-floor process, is close to the evolution of the single-floor TASEP, described in Lemma~\ref{lem-1floor-dyn}(ii) (with $\alpha=\beta$).
We conclude that, for our original process, for all sufficiently large $N$, the instantaneous flux at site $1$ (under right-side asymptotics) at times $t\ge T$ is at least $1/4 -2\epsilon$. Therefore, the limiting steady-state flux is at least $1/4$, and then equal to $1/4$.
Consider now any distribution 
  $\ch_r$, which is a subsequential limit under the right-side asymptotics. 
  We see that the flux of $\ch_r$ is $1/4$.  We also see that $\ch_r$ has to be dominated (from the point of view of holes) by $[\Psi_{c-1} \cl(\beta,1)]^{\updownarrow}$;
  indeed, $\ch_r$ is a distributional limit of the process, which is monotone non-increasing and its evolution on any finite time interval
  converges to the evolution of the process in Lemma~\ref{lem-1floor-dyn}(ii) (with $\alpha=\beta$), ``lifted up'' to floor $c$.
  \edit{By Lemma~\ref{lem-strict-dom}(ii), 
   $\ch_r = [\Psi_{c-1} \cl(\beta,1)]^{\updownarrow}$.} 
  
  Finally, the left-side asymptotic limit $\ch_l$ must be equal to $\cl(\alpha,c)$, \edit{again by Lemma~\ref{lem-strict-dom}(ii), because the former dominates the latter and must have the same flux $1/4$.}
  
  Generalization to arbitrary $\ell \ge 0$ is as follows. Increasing the number of floors from $c-\ell$ to $c$ 
  cannot reduce flux, so that the limiting flux is still $1/4$. 
  Then the right-side asymptotic limit $\ch_r$ is obtained in exactly same way; 
  so, $\ch_r = [\Psi_{c-1} \cl(\beta,1)]^{\updownarrow}$. The left-side asymptotic limit $\ch_l$ has flux $1/4$ and is dominated by  $\Psi_{\ell} \cl(\alpha,c-\ell)$,
  \edit{which also has flux $1/4$; 
  by Lemma~\ref{lem-strict-dom}(ii), $\ch_l = \Psi_{\ell} \cl(\alpha,c-\ell)$.}
  $\Box$
  
 We now introduce terminology which will help interpret the simulation results is Section~\ref{sec-simulations} and state 
 the general Conjecture~\ref{conj-obbc}  
 about the system asymptotic limit in Section~\ref{sec-obbc}.
 
 Recall that for a given state $Q$ and $m\le c$ we defined
$$
A_{m} = \sup\{n\ge 1 ~|~ Q_k \ge m, ~\forall k \le n\},
$$
where, by convention, $A_m=0$ if the set under $\sup$ is empty; note that $A_0 = N$. Then, sites $\{1, \ldots, A_{c-1}\}$ form a ``floor-$c$ zone,'' where there is either $c$ or $c-1$ particles in each site. (If $A_{c-1} < 1$, this zone is empty.) Similarly, for $m=c-1, \ldots, 1$, in view of \eqn{eq-queue-cond},
sites $\{A_{m} + 1, \ldots, A_{m-1}\}$ form a ``floor-$m$ zone,'' where there is either $m$ or $m-1$ particles in each site. 
(If $A_{m} = A_{m-1}$, floor-$m$ zone is empty.) These zones are random, of course, depending on the current state $Q(t)$.
We will say that a floor-$m$ zone is {\em large} (asymptotically, as $N\to\infty$) if the size of this zone in steady-state remains $O(N)$;
zones whose size is $o(N)$ are {\em vanishing}. We will say that a floor-$m$ zone is {\em large stable} (asymptotically, as $N\to\infty$),
if its size normalized by $N$ converges to a positive constant and, moreover, the position of the zone within the space dimension normalized 
to interval $[0,1]$, converges to a fixed one.

In a single-floor (standard TASEP) system, there is, of course, only one, fixed zone on floor $1$, occupying the entire space. 
In addition to Proposition~\ref{prop-single-floor}, it is known (cf. \cite[section 2.2]{Blythe_2007})
that in the single-floor system, when $\alpha \le 1/2$ and $\alpha < \beta$, as $N\to\infty$, the bulk density of the particles in steady-state converges to constant density $\alpha$ and, moreover, ``within the floor-$1$ zone'' (away from the right boundary) the stationary distribution becomes 
$\nu_\alpha$. When $\alpha \ge 1/2$ (and then $\beta \ge 1/2$), the bulk density is $1/2$ and the stationary distribution within the floor-$1$ zone
(away from both boundaries) becomes $\nu_{1/2}$. For example, when $\alpha < \beta \le 1/2$, the system behavior can be interpreted as follows: kinematic wave of particles at density $\alpha$ propagate from the left end to the right, kinematic wave of holes at density $\beta$ propagate 
from the right end to the left; after these two waves meet and form a shock (discontinuity of the density), 
the shock point will move right (given $\alpha < \beta$) until
the density $\alpha$ ``occupies'' the bulk of the zone; note that in this case the flux $\alpha(1-\alpha)$ of the particles moving from the left is smaller
than the flux $\beta(1-\beta)$ of holes moving from the right; so, the system limiting flux is the smaller of the two.
By interchangeability of particles and holes, in the case $\beta < \alpha \le 1/2$,
the bulk of the zone will have holes' density $\beta$ (and then particle density $1-\beta$), and the system limiting flux is $\beta(1-\beta)$.
This behavior of the single-floor process provides a key intuition for our Conjecture~\ref{conj-obbc}.

 \section{Simulations}
  \label{sec-simulations}
  
  We now present some simulations to illustrate our formal results and motivate a general Conjecture~\ref{conj-obbc} in Section~\ref{sec-obbc}.
   Each simulation is of the system with $N=1200$ sites, the simulation time is 
  $2\cdot 10^9 / N$, the initial state is ``empty'' (all queues are zero). In the figures: the $x$-axis is the space dimension with site index increasing from left to right; the $y$-axis is the time dimension, with the time running ``downward''; a site color at a given time indicates its queue length, with lighter colors corresponding to higher queue lengths.
  
 Consider first a simple case of $c=2$, $\alpha=\beta=1$. By Theorem~\ref{th-main-res}, 
 the left-side steady-state limit is $\Psi_1 \cl(1,1)$ with effective floor $2$ 
  and flux $1/4$. By symmetry, the right-side steady-state limit is same, but from the point of view of holes. 
  Consistent with that, we see (Figure~\ref{multi_tasep_100_100-c2-zero-init-dur2000}) 
  what appears to be two large stable zones, on floor $2$ on the left and floor $1$ on the right, 
  of roughly same size, with the front (boundary) between them being roughly in the middle.
 In the case $c=3$ and $\alpha=\beta=1$ we see (Figure~\ref{multi_tasep_100_100-c3-zero-init-dur2000}) three large stable, floor-$1$, -$2$, and -$3$, zones, 
  with roughly equal size. 
  
To illustrate the phase transition of the flux with respect to $\alpha$, consider, first, the process with $\alpha=0.45$, $\beta=1$, with two values of $c=2,3$. (See simulations in Figures~\ref{multi_tasep_45_100-c2-zero-init-dur2000}-\ref{multi_tasep_45_100-c3-zero-init-dur2000}.) 
We see that the effective floor of $\cl(0.45,2)$ is $1$, which it has to be according to Theorem~\ref{th-main-res}, 
because $0.45 < \alpha^*_1 = 1/2$; which means that there is only one large stable zone on floor $1$.
We also see that when $c$ increases from $2$ to $3$, the left-side limit effective floor appears to change from $1$ to $2$
(and we see two large stable zones, of about equal size, on floors $2$ and $1$).
This means that
  $0.45 \ge \alpha^*_2$, and therefore flux $\phi(0.45,2)=\phi(0.45,3)=1/4$; because if not,
  then according to Theorem~\ref{th-main-res} the effective floor of $\cl(0.45,3)$ would be $1$. 
  
  To ``test'' Theorem~\ref{th-special-max-flux}, consider the case [$\alpha=0.45$, $\beta=0.6$, $c=2$] 
  (see Figure~\ref{multi_tasep_45_60-c2-zero-init-dur2000})
  in which, compared to [$\alpha=0.45$, $\beta=1$, $c=2$], parameter $\beta$ is reduced to $0.6$. 
  In both cases we see what appears to be the same left-side limit, with effective floor $1$ and flux $1/4$. 
  
  Consider now the process with $\alpha=0.3$, $\beta=1$, and with $c=2,3,4$. (See simulations in Figures~\ref{multi_tasep_30_100-c2-zero-init-dur2000}-\ref{multi_tasep_30_100-c4-zero-init-dur2000}.) When $c=2$, the effective floor of the left-side limit appears to be $1$, and therefore 
  there is only one large stable zone, on floor $1$. The particle density in the floor $1$ zone is about $0.46 < 1/2$ and the corresponding flux is approximately
  $0.46(1-0.46) < 1/4$. When $c=3$, the effective floor of the left-side limit appears to be still $1$, but appears to change to $2$ when $c$ increases to $4$
  (and we see two large stable zones of approximately equal size); 
then, by Theorem~\ref{th-main-res}, $\alpha^*_3 \le 0.3 < \alpha^*_2$, and therefore flux $\phi(0.3,2) < \phi(0.3,3)=\phi(0.3,4)=1/4$. 

Consider now the system with $\alpha=0.3$, $\beta=0.48$, $c=2$, which is outside of the scope of Theorem~\ref{th-special-max-flux} since $\beta< 1/2$;
see simulation in Figure~\ref{multi_tasep_30_48-c2-zero-init-dur2000}. 
Our simulation of the case $\alpha=0.3$, $\beta=1$, $c=2$, described above, suggests that the flux is about $0.46(1-0.46)$, which means that the 
bulk density parameter $\gamma$ in the floor-$1$ zone is about $0.46 < \beta = 0.48$. We observe that
the stationary distribution in the case [$\alpha=0.3$, $\beta=0.48$, $c=2$] is ``almost same'' as in the case [$\alpha=0.3$, $\beta=1$, $c=2$],
in the sense of having the single large stable  floor-$1$ zone, with the same limiting density, and same limiting flux. 

Next, consider the case [$\alpha=0.3$, $\beta=0.4$, $c=2$]; see simulation in Figure~\ref{multi_tasep_30_40-c2-zero-init-dur2000}.
Here, compared to the cases [$\alpha=0.3$, $\beta=1$, $c=2$] and [$\alpha=0.3$, $\beta=0.48$, $c=2$], we make $\beta < \gamma$,
where $\gamma \approx 0.46$ is the density in the single large stable floor-$1$ zone, arising in those cases. 
We observe that the effective floor of the left-side limit appears to be still $1$, and therefore floor-$1$ is still the only large stable zone.
However, the bulk density we observe in the floor-$1$ zone is about $0.6$, which means that the holes' density appears to be equal to $\beta=0.4$.
Consequently, the flux has decreased from $\gamma(1-\gamma) \approx 0.46(1-0.46)$ to a smaller value $\beta(1-\beta) = 0.4(1-0.4)$.

Let us now go from the case [$\alpha=0.3$, $\beta=0.4$, $c=2$] to case [$\alpha=0.3$, $\beta=0.4$, $c=3$], 
i.e. increase the number of floors from $2$ to $3$;
see simulation in Figure~\ref{multi_tasep_30_40-c3-zero-init-dur2000}. 
We see, again, the single large stable zone, but it is of completely different type. It is now the floor-$2$ zone.
(So, the left-side limit has effective floor $2$, and the right-side limit, from the point of view of holes,
also has effective floor $2$.) The particle density in the floor-$2$ zone is about $1.46$, so ``on floor $2$'' it is about $0.46$,
and the flux is about $0.46(1-0.46)$. We see that the flux is greater than in the case $c=2$ (as it should be), 
but is still less than the maximum possible limiting flux $1/4$. Let us increase the number of floors to $4$ -- see simulation
for the case [$\alpha=0.3$, $\beta=0.4$, $c=4$] in Figure~\ref{multi_tasep_30_40-c4-zero-init-dur2000}. Again, the only large stable zone in on floor $2$,
but the density on this floor is now about $1/2$, and therefore the maximum limiting flux $1/4$ is achieved.
Increasing the number of floors to $5$ (simulation of case  [$\alpha=0.3$, $\beta=0.4$, $c=5$] in Figure~\ref{multi_tasep_30_40-c5-zero-init-dur2000}),
we see that now we have two large stable zones, of about equal size, on floors $3$ and $2$.

      \begin{figure}
\centering
        \includegraphics[width=6in]{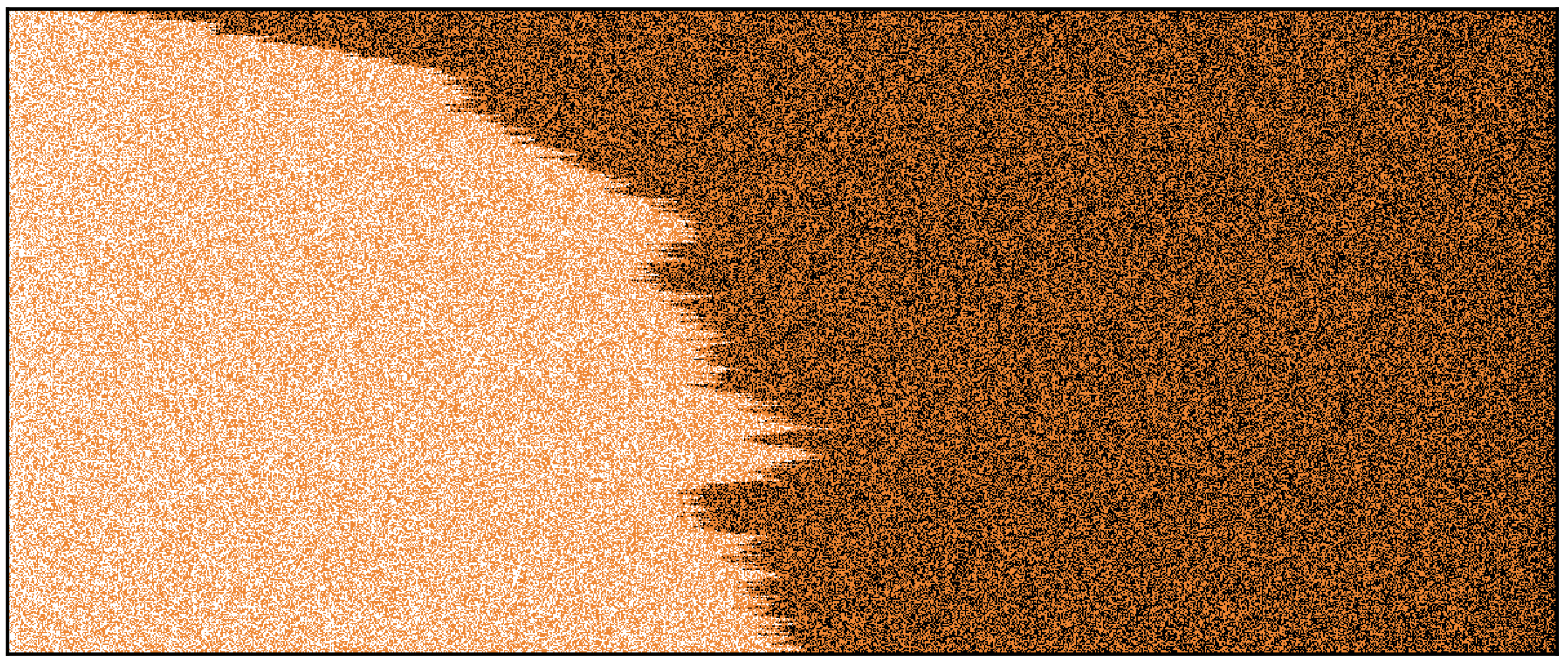}
    \caption{$\alpha=1, \beta=1, c=2$}
    \label{multi_tasep_100_100-c2-zero-init-dur2000}
\end{figure}

      \begin{figure}
\centering
        \includegraphics[width=6in]{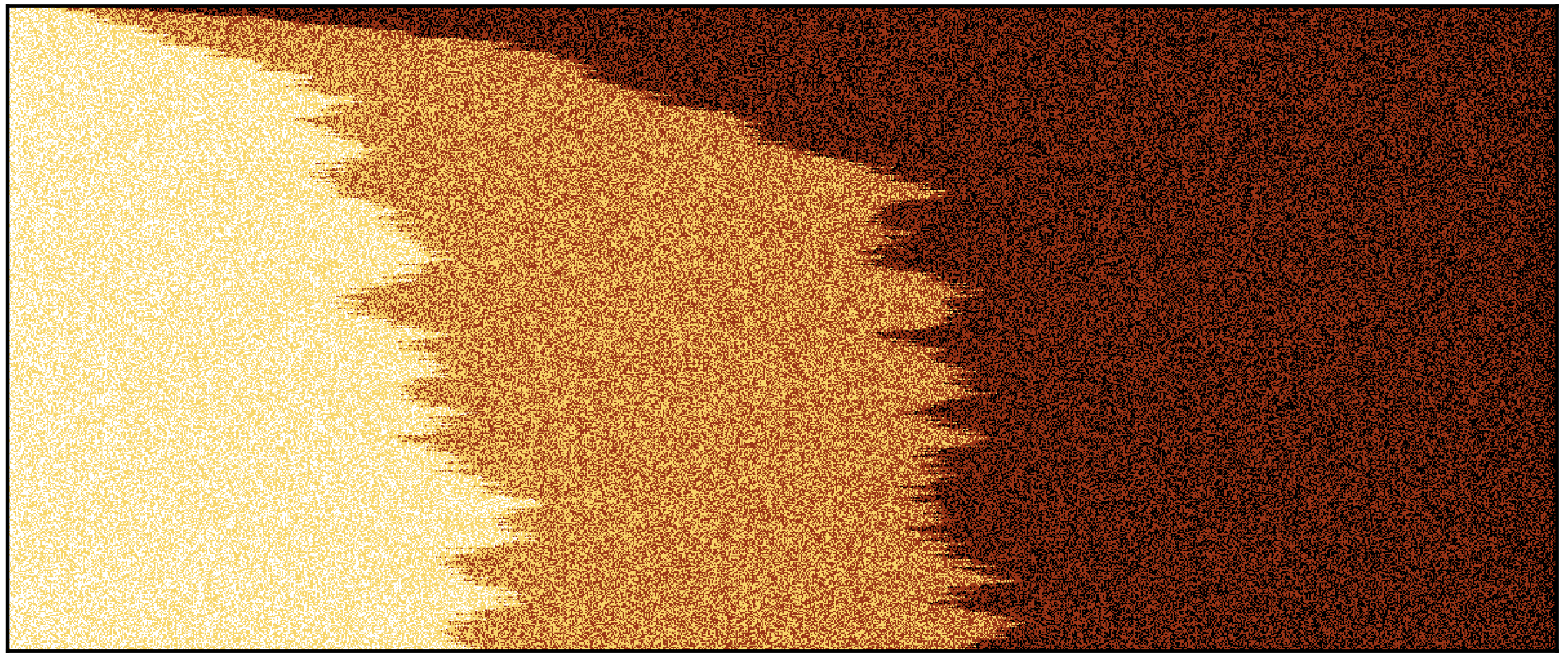}
    \caption{$\alpha=1, \beta=1, c=3$}
    \label{multi_tasep_100_100-c3-zero-init-dur2000}
\end{figure}

    \begin{figure}
\centering
        \includegraphics[width=6in]{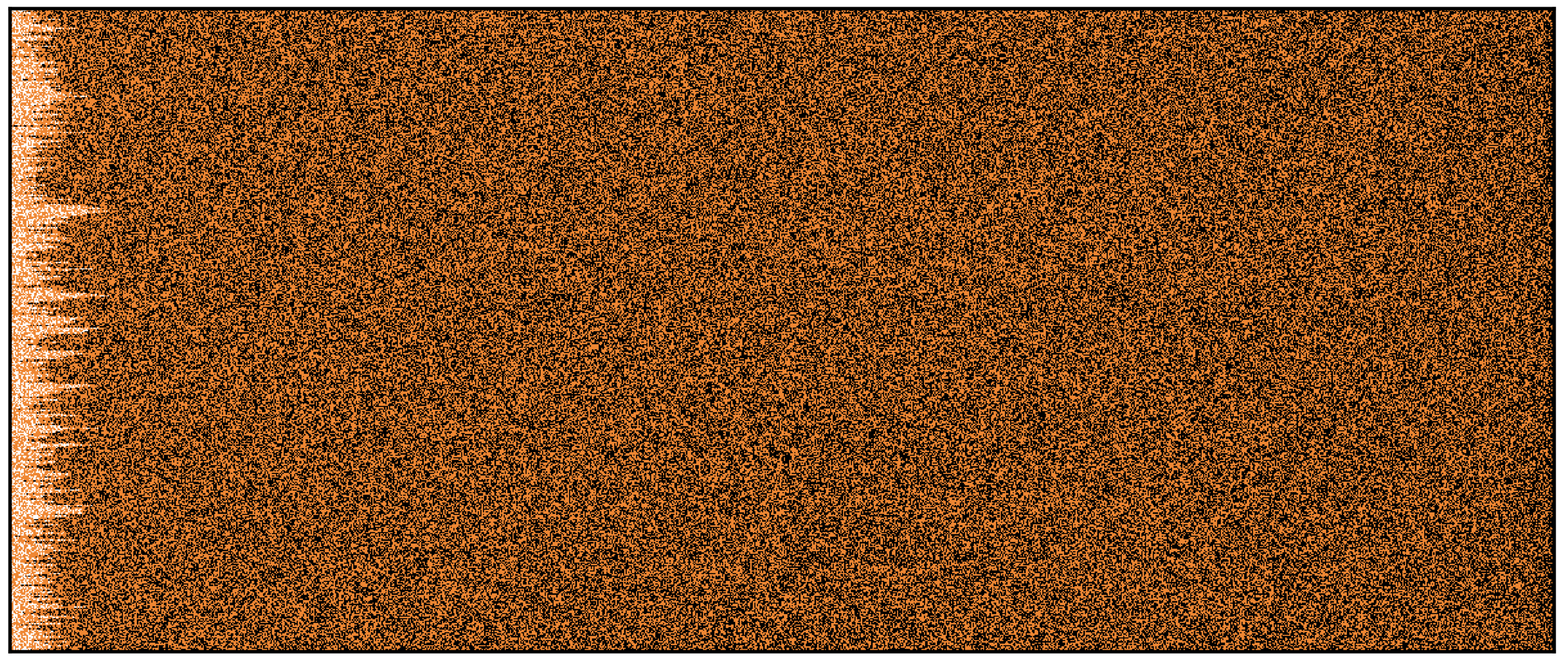}
    \caption{$\alpha=0.45, \beta=1, c=2$}
    \label{multi_tasep_45_100-c2-zero-init-dur2000}
\end{figure}

  \begin{figure}
\centering
        \includegraphics[width=6in]{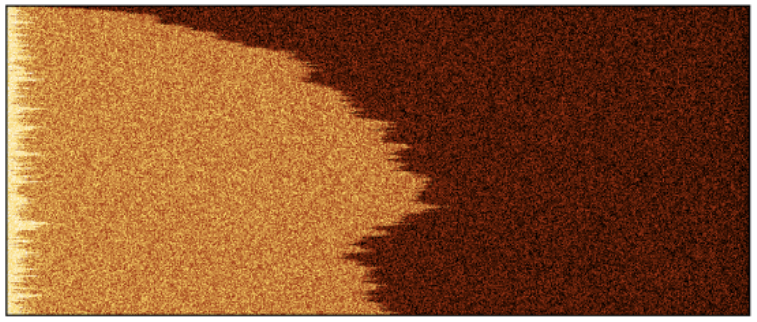}
    \caption{$\alpha=0.45, \beta=1, c=3$}
    \label{multi_tasep_45_100-c3-zero-init-dur2000}
\end{figure}

  \begin{figure}
\centering
        \includegraphics[width=6in]{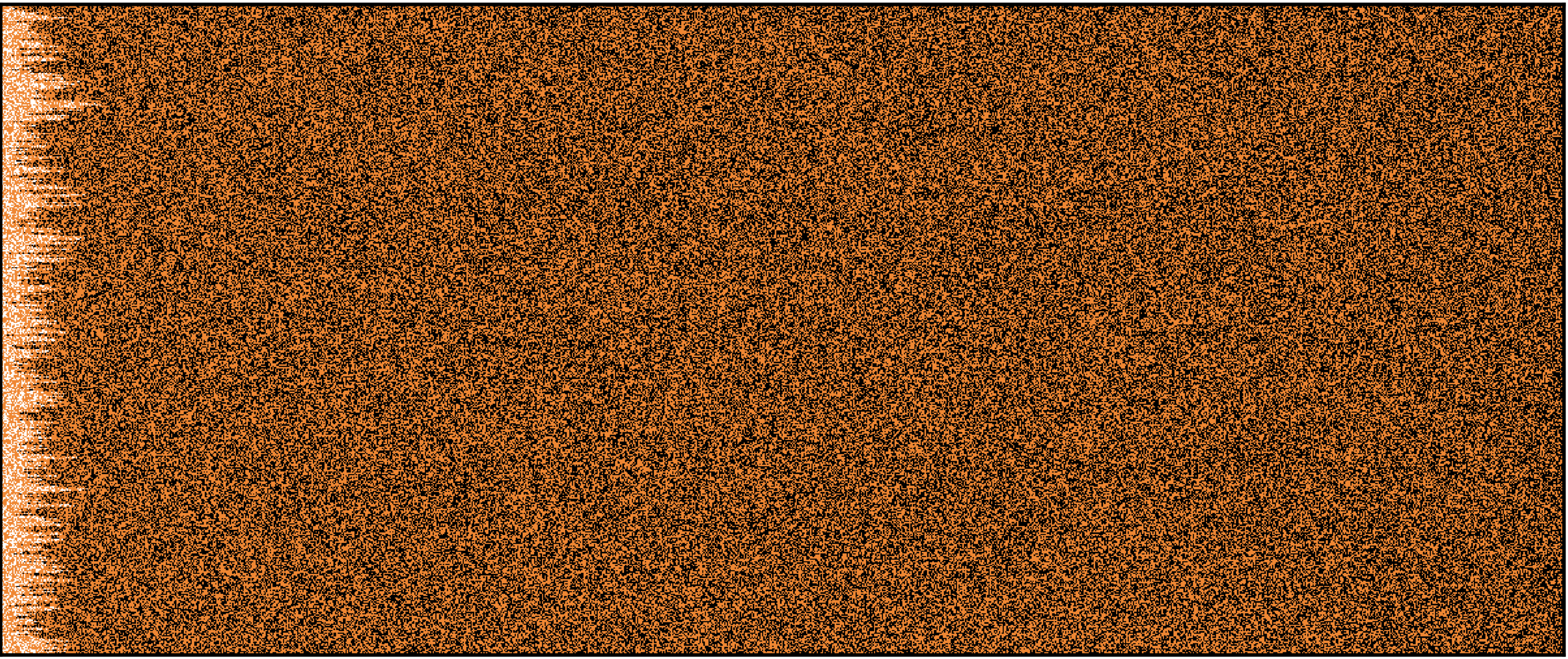}
    \caption{$\alpha=0.45, \beta=0.6, c=2$}
    \label{multi_tasep_45_60-c2-zero-init-dur2000}
\end{figure}

  \begin{figure}
\centering
        \includegraphics[width=6in]{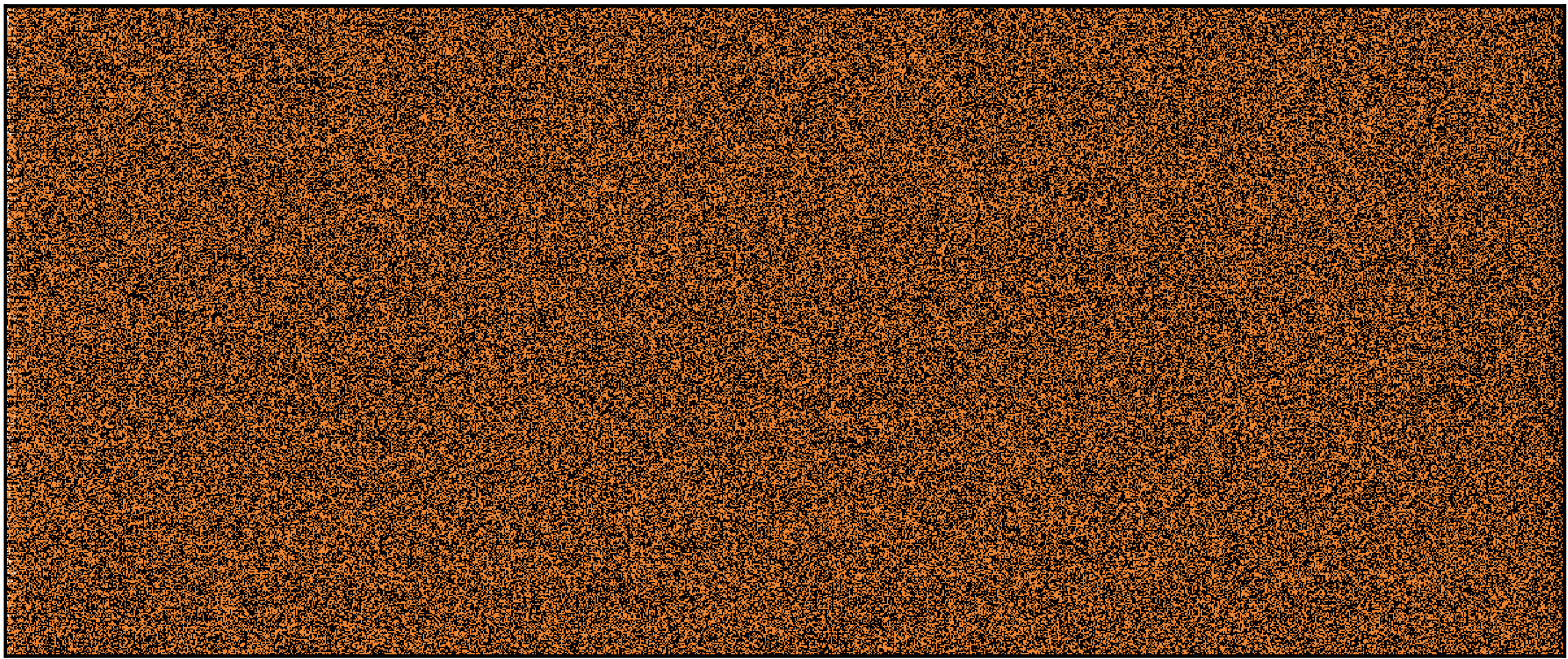}
    \caption{$\alpha=0.3, \beta=1, c=2$}
    \label{multi_tasep_30_100-c2-zero-init-dur2000}
\end{figure}

  \begin{figure}
\centering
        \includegraphics[width=6in]{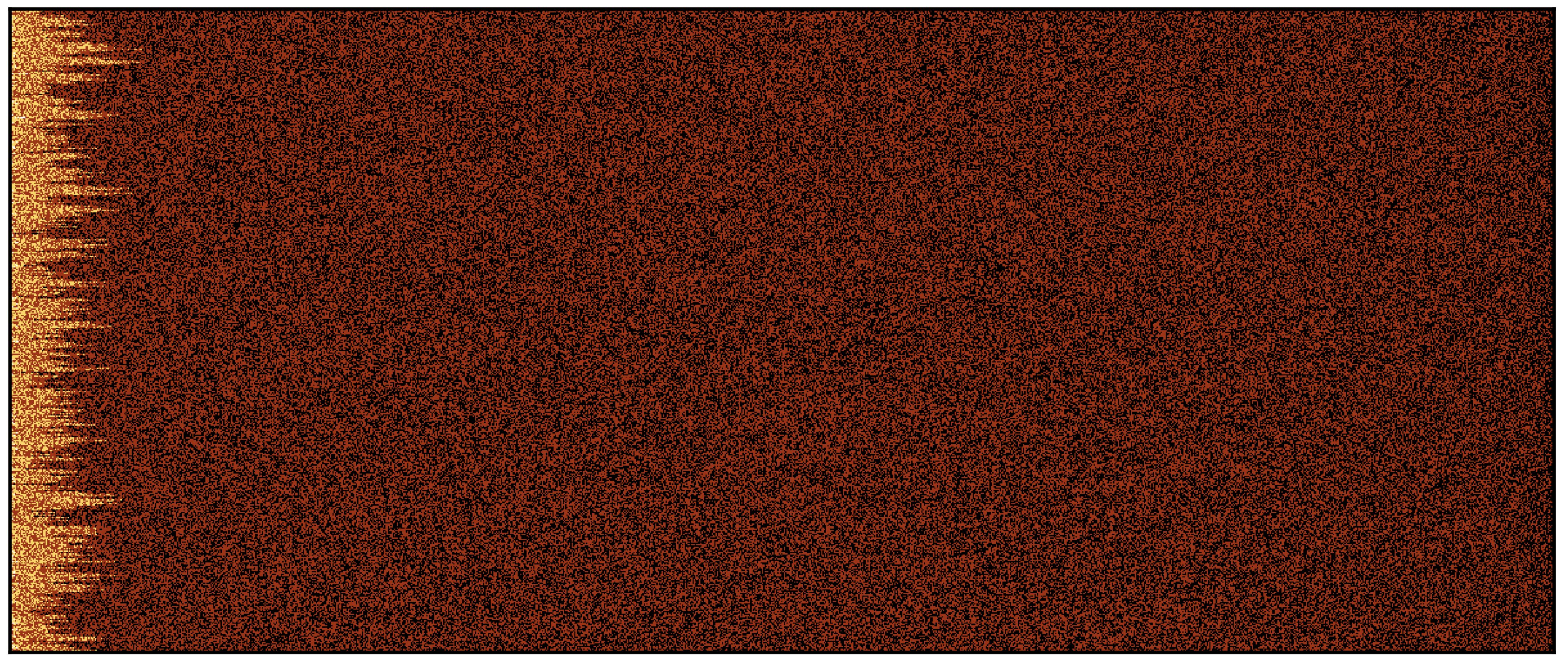}
    \caption{$\alpha=0.3, \beta=1, c=3$}
    \label{multi_tasep_30_100-c3-zero-init-dur2000}
\end{figure}

  \begin{figure}
\centering
        \includegraphics[width=6in]{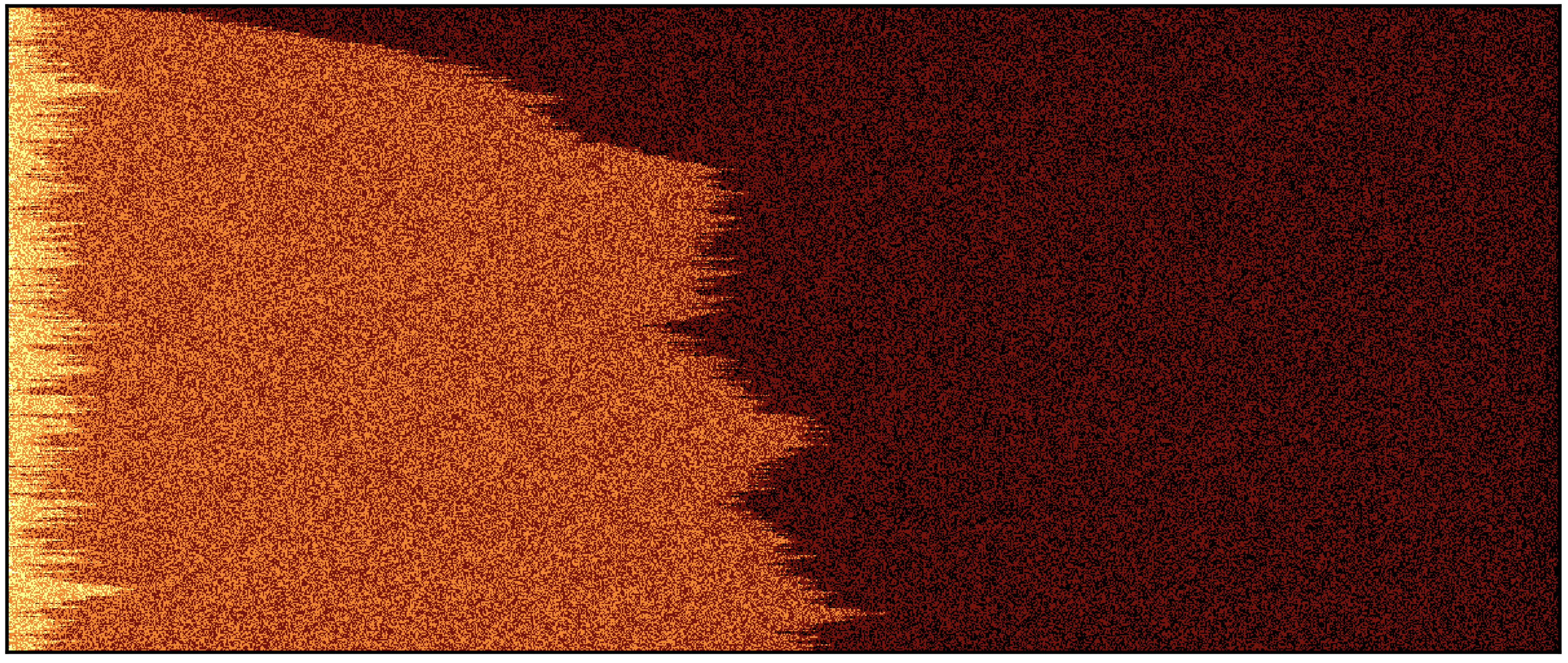}
    \caption{$\alpha=0.3, \beta=1, c=4$}
    \label{multi_tasep_30_100-c4-zero-init-dur2000}
\end{figure}

  \begin{figure}
\centering
        \includegraphics[width=6in]{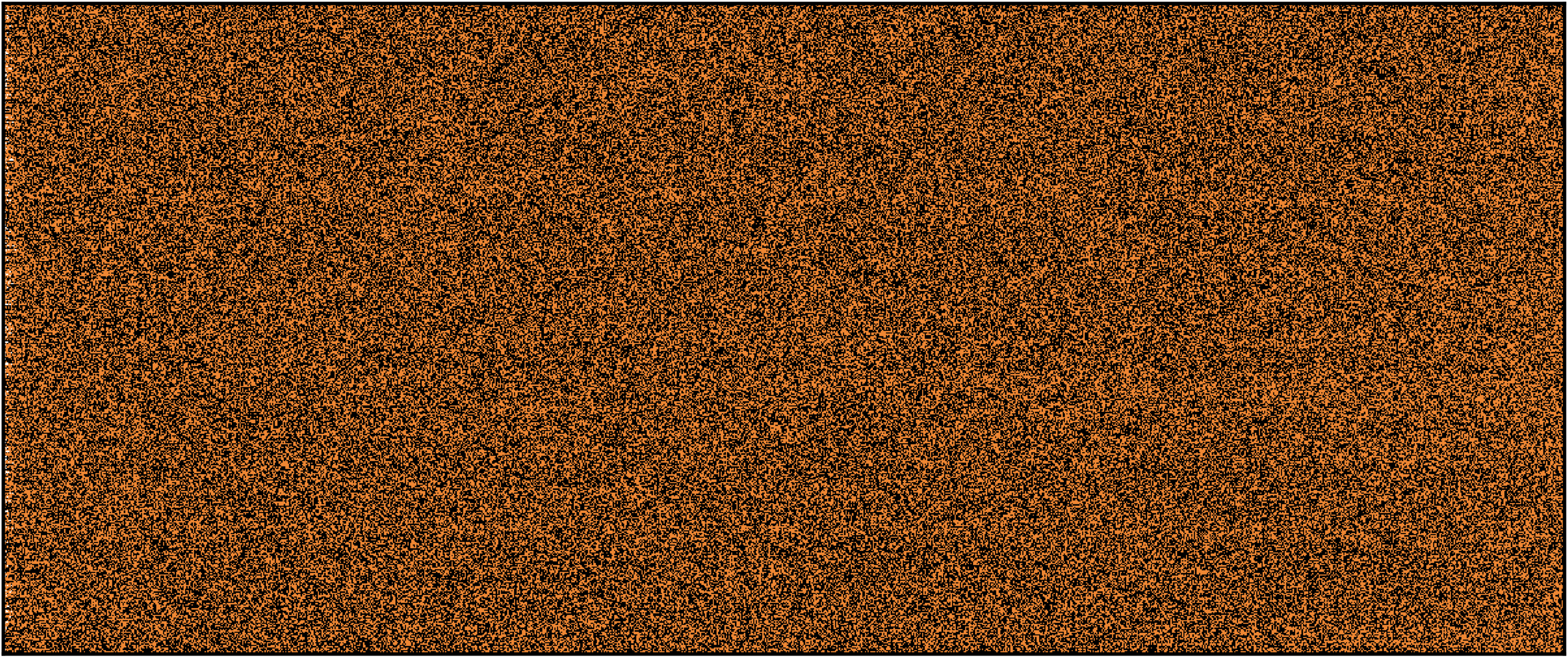}
    \caption{$\alpha=0.3, \beta=0.48, c=2$}
    \label{multi_tasep_30_48-c2-zero-init-dur2000}
\end{figure}

  \begin{figure}
\centering
        \includegraphics[width=6in]{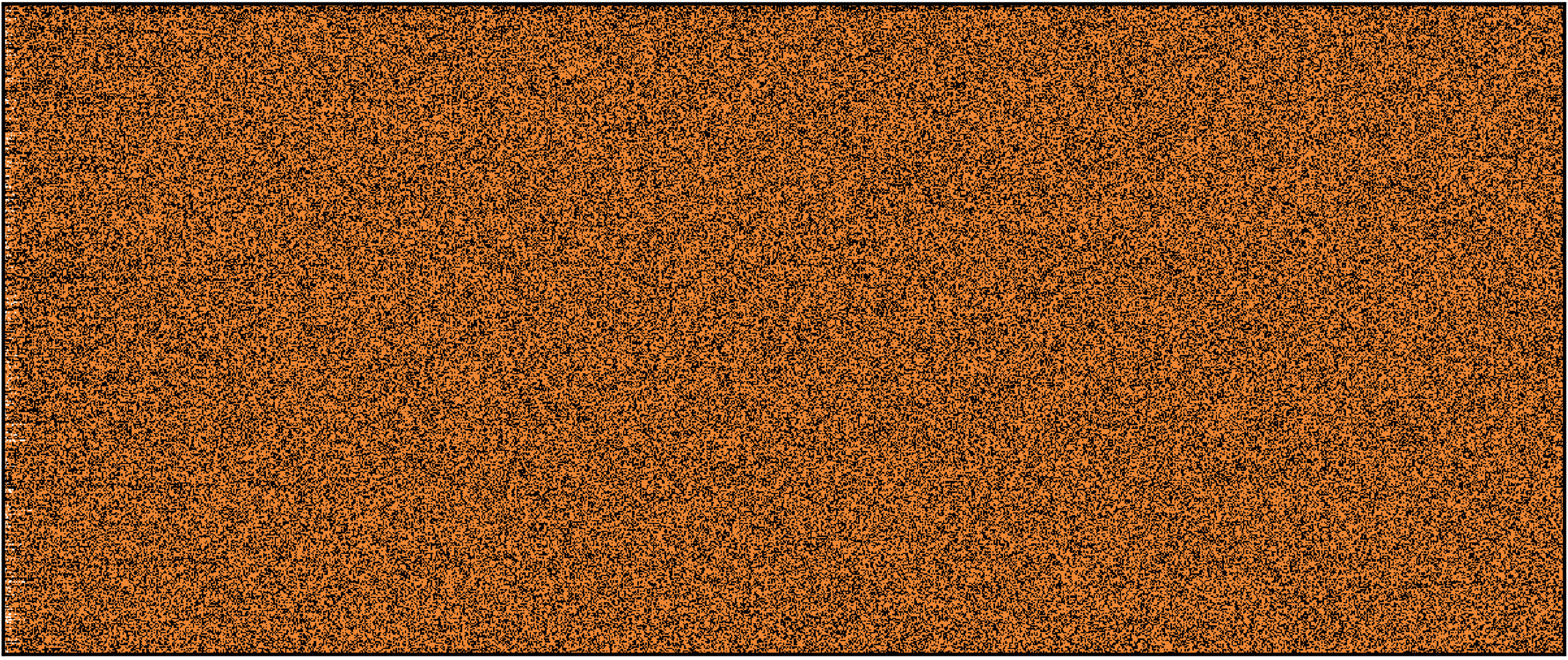}
    \caption{$\alpha=0.3, \beta=0.4, c=2$}
    \label{multi_tasep_30_40-c2-zero-init-dur2000}
\end{figure}

  \begin{figure}
\centering
        \includegraphics[width=6in]{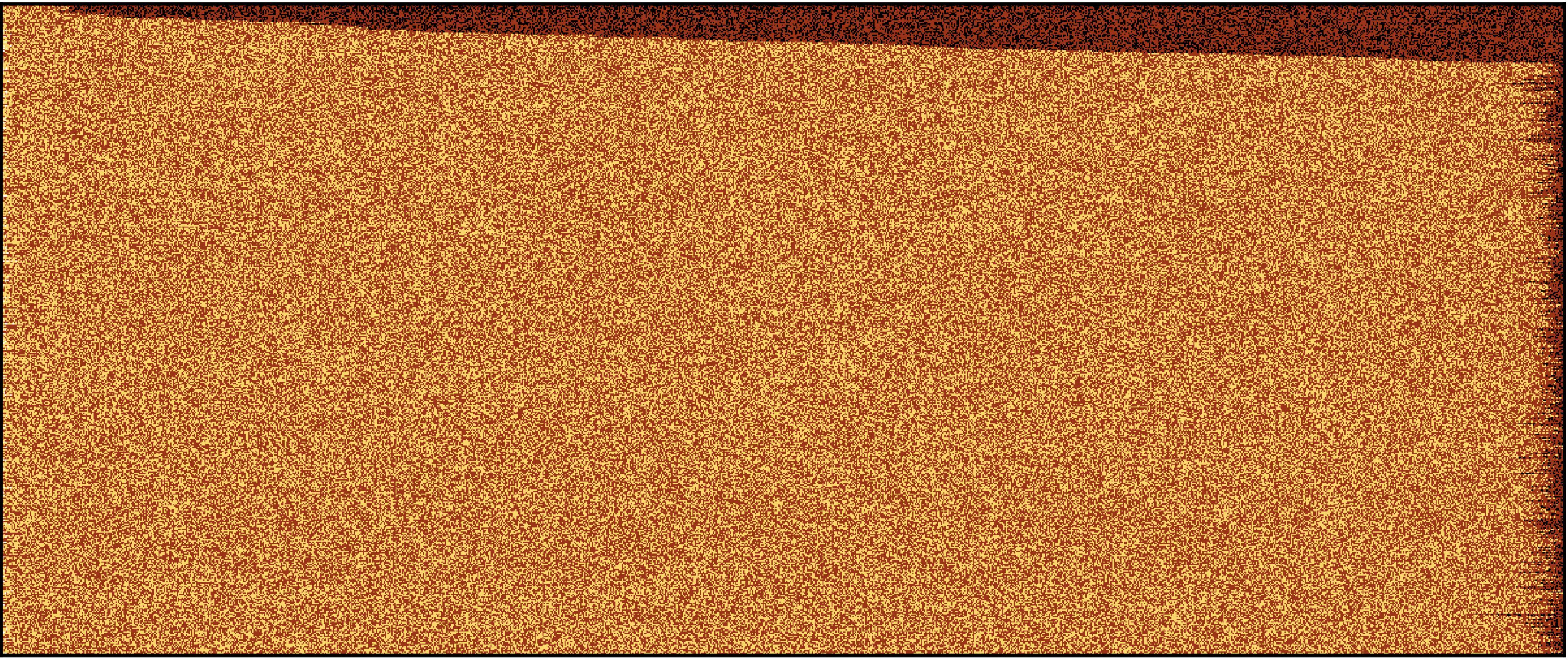}
    \caption{$\alpha=0.3, \beta=0.4, c=3$}
    \label{multi_tasep_30_40-c3-zero-init-dur2000}
\end{figure}

  \begin{figure}
\centering
        \includegraphics[width=6in]{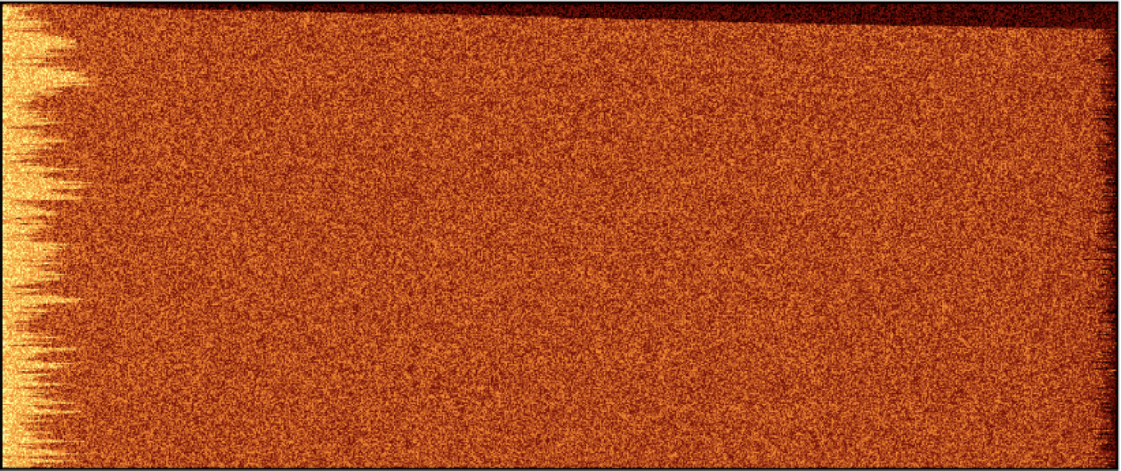}
    \caption{$\alpha=0.3, \beta=0.4, c=4$}
    \label{multi_tasep_30_40-c4-zero-init-dur2000}
\end{figure}

  \begin{figure}
\centering
        \includegraphics[width=6in]{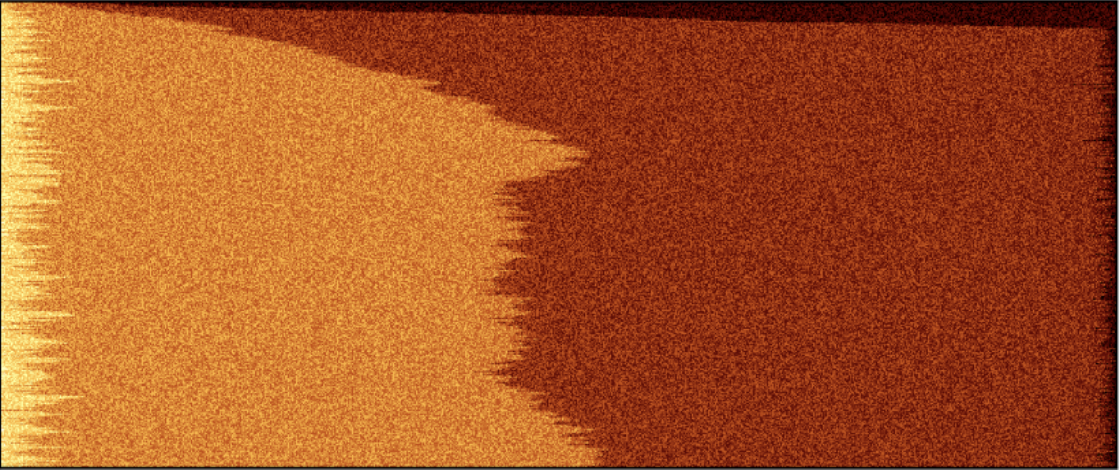}
    \caption{$\alpha=0.3, \beta=0.4, c=5$}
    \label{multi_tasep_30_40-c5-zero-init-dur2000}
\end{figure}

  \section{A general conjecture}
  \label{sec-obbc}

   \subsection{The conjecture and its motivation}
   
   Let us introduce the following definitions.
For $c\ge 1$, $\alpha > 0$ and $\beta>0$, let
 \beql{eq-max-flux-gen}
   \zeta(\alpha,\beta,c) \doteq \max_{1\le \ell \le c} \min\{\phi(\alpha,c-\ell+1), \phi(\beta,\ell)\}.
   \eeql
Denote by $L^*$ the set of $\ell$ where the maximum in \eqn{eq-max-flux-gen} is attained: 
   $$
   L^* \doteq \argmax_{1\le \ell \le c} \min\{\phi(\alpha,c-\ell+1), \phi(\beta,\ell)\}.
   $$
   It is easy to see that set $L^*$ necessarily has the form $L^*=\{\ell_1 \le \ell \le \ell_2\}$, and if   $\zeta(\alpha,\beta,c)<1/4$
   it has at most two elements.
   
   In Conjecture~\ref{conj-obbc}
   we will consider the following conditions:
   \beql{eq-case-a}
   \mbox{$L^*$ consists of a single element $\ell^*$ and $\phi(\alpha,c-\ell^*+1) \ne \phi(\beta,\ell^*)$},
   \eeql
   \beql{eq-case-b}
  \zeta(\alpha,\beta,c) = 1/4.
   \eeql
   Using properties of flux function $\phi(\alpha,c)$ (in Theorem~\ref{th-main-res} and  Lemma~\ref{lem-phi-props}), it is easy to observe the following.
   Condition \eqn{eq-case-a} necessarily implies that $\zeta(\alpha,\beta,c) < 1/4$.
   Condition \eqn{eq-case-b} 
    is equivalent to $c \ge c_\alpha + c_\beta -1$, and it necessarily implies that $\alpha > 1/4$, $\beta>1/4$, 
   and $L^* = \{c_\beta, \ldots, c-c_\alpha+1\}$. Conditions \eqn{eq-case-a} and \eqn{eq-case-b} cover a ``typical'' case 
   in the following sense: for any fixed $c$ and $\beta$, either \eqn{eq-case-a} or \eqn{eq-case-b} holds for all 
   $\alpha$ except a set of zero Lebesgue measure.
    
         \begin{conjecture} 
  \label{conj-obbc}
      Consider a finite system with the number of floors $c \ge 1$, and parameters $\alpha > 0$ and $\beta>0$. 
   Then, as $N\to\infty$, the following holds.
   
   (i) The limiting flux is equal to $\zeta(\alpha,\beta,c)$.
   
   (ii) Suppose condition \eqn{eq-case-a} holds. Let $\phi(\alpha,c-\ell^*+1) < \phi(\beta,\ell^*)$ to be specific (there is no loss of generality, because of particles/holes interchangeability). Then we have the following. The sequence of stationary distributions is such that there is 
only one large stable zone, on floor $\ell^*$, ``within which'' the distribution converges to $\Psi_{\ell^*-1} \nu_\gamma$,
   where the density $\gamma <1/2$ is determined by the flux, $\gamma(1-\gamma) = \phi(\alpha,c-\ell^*+1)= \zeta(\alpha,\beta,c)$.
   The left-side limit of the stationary distribution is $\ch_l = \Psi_{\ell^*-1} \cl(\alpha,c-\ell^*+1)$, which behaves like $\Psi_{\ell^*-1} \nu_\gamma$ at infinity.
   The right-side limit of the stationary distribution $\ch_r$ is a distribution with effective floor $c-\ell^*+1$,
   which behaves like $[\Psi_{c-\ell^*} \nu_{1-\gamma}]^{\updownarrow}$ at infinity; $\ch_r$ is equal to the (lifted up by $c-\ell^*$ floors) limit for the one-sided
   system with $\ell^*$ floors, the attempted (hole) arrival rate $\beta$, the attempted (hole) departure rate (from the other end) $\gamma$.
   
   (iii) Suppose condition \eqn{eq-case-b} holds.  (Recall that in this case, necessarily, $\alpha > 1/4$, $\beta>1/4$, $c \ge c_\alpha + c_\beta -1$, and
   $L^* = \{c_\beta, \ldots, c-c_\alpha+1\}$.)  Then we have the following. The sequence of stationary distributions is such that there are
   exactly $|L^*| = c +2 - c_\alpha - c_\beta$ large stable zones, of asymptotically equal size, 
  on floors $\ell = c_\beta, c_\beta + 1, \ldots, c-c_\alpha+1$, ``within which'' the distribution converges to $\Psi _{\ell-1} \nu_{1/2}$.
   The left-side asymptotic limit $\ch_l$ of the stationary distribution is $\Psi_{c-c_\alpha} \cl(\alpha,c_\alpha)$;
 the right-side asymptotic limit $\ch_r$ of the stationary distribution (from the point of view of holes) is
  $[\Psi_{c-c_\beta} \cl(\beta,c_\beta)]^{\updownarrow}$.
      \end{conjecture}

        The intuition for the conjecture is as follows. As a thought experiment,  suppose that, as $N\to\infty$, there is only one large stable zone, on a floor $\ell^*$.
    Then the left-side limit has effective floor $\ell^*$. So, particles moving from the left, ``cascade down'' and, eventually, after $O(1)$ time end up on floor $\ell^*$, on which the process is just like standard TASEP (lifted up by $\ell^*-1$ floors). If the attempted ``departure''  rate of this process (from some imaginary end site far on the right) would be $1$, then the left side limit would be $\Psi_{\ell^*-1} \cl(\alpha, c-\ell^*+1)$ and have flux $\phi(\alpha, c-\ell^*+1)$;
    let us further assume that it would behave like $\nu_\gamma$ at infinity, where the bulk particle density $\gamma \le 1/2$ is determined by the flux,
    namely $\gamma(1-\gamma) = \phi(\alpha, c-\ell^*+1)$. Analogously, the right-side limit (from the point of view of holes) has effective floor
    $c-\ell^*+1$; if the attempted ``departure''  rate of holes (from some imaginary end site far on the left) would be $1$, then the right-side limit would be
    $[\Psi_{c-\ell^*} \cl(\beta, \ell^*)]^\updownarrow$ and have flux $\phi(\beta, \ell^*)$; and let us further assume that it would behave like $\nu_{\gamma'}$ at infinity, where the bulk hole density $\gamma' \le 1/2$ is determined by the flux,
    namely $\gamma'(1-\gamma') = \phi(\beta, \ell^*)$. The particles of the left-side process ``collide'' with the holes of the right-side process
    at some ``point'' within the floor $\ell^*$ zone. Suppose, finally, that the flux $\phi(\alpha, c-\ell^*+1)$ of particles from the left  is smaller than the flux of holes $\phi(\beta, \ell^*)$
    from the right, that is $\gamma < \gamma'$. Then, it is reasonable to assume that the limit of stationary distribution within the bulk of 
    the large stable floor-$\ell^*$ zone is same as the limit of stationary distribution within the bulk of the floor-$1$ zone of the single-floor process with 
    the attempted arrival rate $\gamma$ at the left and the attempted departure rate $\gamma'$ on the right. As stated
    is Section~\ref{sec-left-right}, the latter distribution is $\nu_\gamma$, given $\gamma < \gamma'$, and the limiting flux is 
    $\gamma(1-\gamma)= \min\{\gamma(1-\gamma),\gamma'(1-\gamma')\}$. Therefore, in our multi-floor system,
    we should expect the bulk particle density within the large stable floor-$\ell^*$ zone to be $\gamma$ and the flux to be
    $\phi(\alpha, c-\ell^*+1)=\min\{\phi(\alpha, c-\ell^*+1),\phi(\beta, \ell^*)\}$.
    
    Therefore, Conjecture~\ref{conj-obbc} states that, when $N$ is large, the steady-state is such that the system ``selects'' a floor $\ell^*$ on which
    a large stable zone is formed, so that the flux is maximized among all possible $\ell^*$. The ``optimal'' floor $\ell^*$ may be non-unique.
   Conjecture~\ref{conj-obbc} further states that, for a ``typical'' setting of parameters
    $c$, $\alpha$, $\beta$ (when either  \eqn{eq-case-a} or  \eqn{eq-case-b} holds), 
    either the limiting flux is strictly less than $1/4$ and the optimal $\ell^*$ is unique (and then there is only one large stable zone) or
    the limiting flux is equal to $1/4$ and a well-defined number of equal-size large stable zones form.

It is readily seen that Theorem~\ref{th-main-res}(ii), Proposition~\ref{prop-single-floor}, and Theorem~\ref{th-special-max-flux} 
are special cases of Conjecture~\ref{conj-obbc}.

       \subsection{Simulation results' interpretation in terms of the conjecture}

    All our simulation results (including those not presented in this paper) appear to conform with Conjecture~\ref{conj-obbc}. We now discuss 
    our simulation results again, and 
    interpret them in terms of the conjecture. 
    
    When $\alpha=\beta=1$, we have $c_\alpha=c_\beta=1$, and therefore the limiting flux is $1/4$ and exactly $c$ equal-sized large stable zones should form.
    That is what we see in Figures~\ref{multi_tasep_100_100-c2-zero-init-dur2000} and \ref{multi_tasep_100_100-c3-zero-init-dur2000}.
   
When   $\alpha=0.45$, $\beta=1$, we have $c_\alpha=2$ and $c_\beta=1$. Therefore, when $c\ge 2$,
 the limiting flux is $1/4$ and exactly $c-1$ equal-sized large stable zones should form on floors $1,\ldots,c-1$.
 Figures~\ref{multi_tasep_45_100-c2-zero-init-dur2000} and \ref{multi_tasep_45_100-c3-zero-init-dur2000} demonstrate that.
 
In the case [$\alpha=0.45$, $\beta=0.6$, $c=2$], just like in the case [$\alpha=0.45$, $\beta=1$, $c=2$], we have $c_\alpha=2$ and $c_\beta=1$.
Therefore, in both cases, the only large stable zone should be on floor $1$ and the left-side limit should be $\cl(0.45,2)$.
This is what we see in Figures~\ref{multi_tasep_45_60-c2-zero-init-dur2000} and \ref{multi_tasep_45_100-c2-zero-init-dur2000}.

If $\alpha=0.3$, we have $c_\alpha=3$. Recall that $c_\beta=1$ for $\beta\ge 1/2$.
Then, in the case [$\alpha=0.3$, $\beta=1$, $c=2$] we should see the only large stable zone on floor $1$ and the left-side limit  $\cl(0.3,2)$, with flux $< 1/4$.
The simulation in Figure~\ref{multi_tasep_30_100-c2-zero-init-dur2000} conforms to that, and indicates that flux $\phi(0.3,2)\approx 0.46(1-0.46)$.
When we increase the number of floors to $c\ge 3$,
we should see the left-side limit $\cl(0.3,c)$, flux $1/4$, and $c-2$ equal-sized large stable zones on floors $c-2, \ldots, 1$. 
The simulations in Figures~\ref{multi_tasep_30_100-c3-zero-init-dur2000}-\ref{multi_tasep_30_100-c4-zero-init-dur2000} indeed show that.

The case [$\alpha=0.3$, $\beta=0.48$, $c=2$] is such that $c_\alpha=3$ and $c_\beta=2$. According to our simulation of the case 
[$\alpha=0.3$, $\beta=1$, $c=2$], we have $\phi(0.3,2)\approx 0.46(1-0.46)$. We also know that $\phi(0.48,1) = 0.48(1-0.48)$. Given these numerical values, it is easy to check that set $L^*$ has unique element $\ell^*=1$ and we are within conditions of Conjecture~\ref{conj-obbc}(ii). Therefore, 
there should be only one large stable zone, on floor $1$,
the left-side limit should be $\cl(0.3,2)$, and flux should be $\phi(0.3,2)\approx 0.46(1-0.46)$; this is the same situation as in the case [$\alpha=0.3$, $\beta=1$, $c=2$]. This is what we actually observe -- see Figure~\ref{multi_tasep_30_48-c2-zero-init-dur2000} and compare it to 
Figure~\ref{multi_tasep_30_100-c2-zero-init-dur2000}.
 
 Now consider the case [$\alpha=0.3$, $\beta=0.4$, $c=2$]. We have $c_{0.3}=3$, $c_{0.4}=2$, $\phi(0.3,2)\approx 0.46(1-0.46)$. 
 But, unlike in the case [$\alpha=0.3$, $\beta=0.48$, $c=2$], now $\phi(0.4,1) = 0.4(1-0.4) < \phi(0.3,2)$.
 We are, again, in the conditions of Conjecture~\ref{conj-obbc}(ii), with $\ell^*=1$. But, now the left-side limit is not $\cl(0.3,2)$.
 Instead, the right-side limit is $[\Psi_1 \cl(0.4,1)]^\updownarrow=[\Psi_1 \nu_{0.4}]^\updownarrow$. Therefore, there should be only one large stable zone, on floor $1$,
 with bulk particle density $1-0.4$ (i.e., holes' density $0.4$) and flux $0.4(1-0.4)$. That is what actually seen in simulation
 -- see Figure~\ref{multi_tasep_30_40-c2-zero-init-dur2000}.
 
 Next, consider case [$\alpha=0.3$, $\beta=0.4$, $c=3$]. Once again, $c_{0.3}=3$, $c_{0.4}=2$, $\phi(0.3,2)\approx 0.46(1-0.46)$.
 We know that $2 \le c_{0.4}$ and $\phi(0.4,2) > \phi(0.3,2)$. These numerical values imply that
we are in the conditions of Conjecture~\ref{conj-obbc}(ii), with $\ell^*=2$ and $\phi(0.3,\ell^*) < \phi(0.4,\ell^*)$. 
Therefore, we should see the only large stable zone on floor $2$, with bulk particle density $\approx 0.46$ (on this floor),
and with the left-side distribution limit $\Psi_1 \cl(0.3,2)$. We do observe all these properties in simulation -- 
see Figure~\ref{multi_tasep_30_40-c3-zero-init-dur2000}. If we increase the number of floors to $c=4$ -- this is case [$\alpha=0.3$, $\beta=0.4$, $c=4$] -- 
we observe (Figure~\ref{multi_tasep_30_40-c4-zero-init-dur2000}) one large stable zone on floor $2$, with density about $1/2$, and flux close to $1/4$. 
This indicates that $c_{0.4}=2$, and then $c_{0.3}+c_{0.4}=3+2 = c+1$, and then we are now in 
conditions of Conjecture~\ref{conj-obbc}(iii). This is further confirmed by the case [$\alpha=0.3$, $\beta=0.4$, $c=5$], where we should -- and do
(Figure~\ref{multi_tasep_30_40-c5-zero-init-dur2000}) -- observe the formation of two large stable zones on floors $2$ and $3$, of approximately equal size.

 \section{Discussion and future research directions}
  \label{sec-discussion}
  
  \edit{
  We now briefly compare our results to those in \cite{CEKT16,SJHW11}, for the different multi-lane models described in Section~\ref{sec-model-motivations}. Our model could be viewed as a limiting case of the model in \cite{CEKT16} when: $\alpha_c=\alpha$, $\alpha_m=0$ for $m<c$;
  $\beta_1=\beta$, $\beta_m=0$ for $m>1$; the migration rates are $d_{m,\ell} = \infty$ for $m>\ell$, and $d_{m,\ell} = 0$ for $m<\ell$. In other words, 
 at the left boundary particles can only arrive into the highest lane (floor) $c$, at the right boundary they can only depart from the lowest lane (floor) $1$,
 and any particle at any site instantly migrates to the lowest vacant lane (floor). Paper \cite{CEKT16} studies a mean-field based hydrodynamic model -- a set of coupled differential equations, describing the evolution of
 particle densities in each lane. The paper focuses on stationary solutions being equilibrated plateaux, which correspond to the sets of lane particle densities $\{\rho_m\}$, that are stable with respect to inter-lane migration dynamics. The analysis further assumes that the ``left particle reservoir densities'' (the arrival rates $\{\alpha_m\}$) and ``right particle reservoir densities'' (the set $\{1-\beta_m\}$) are also equilibrated (stable). However, if the model in present paper is cast as a limiting case of the model in \cite{CEKT16} (with $\alpha_m$ and $\beta_m$ specified above), the boundary conditions $\{\alpha_m\}$ and $\{1-\beta_m\}$
 are {\em not} equilibrated. In this case, as \cite{CEKT16} states, the relation between the equilibrated densities
of the plateaux in the bulk and the boundary conditions is not known. Consequently, for our model, the hydrodynamic approximation approach in \cite{CEKT16} does not lead to conclusions beyond the fact that any equilibrated plateau densities are such that, for some floor $m$,
$\rho_\ell=0$ for $\ell>m$ and $\rho_\ell=1$ for $\ell<m$, which corresponds to the process ``living on one of the floors;'' 
but this fact trivially follows from the structure of our model, and does not tell {\em how the system chooses the floor(s)} in steady-state and {\em what is 
the particle density on those floors} -- which our results do address.
}
 
  \edit{
 Paper \cite{SJHW11} considers a strongly asymmetric two-lane system. The key element of the analysis in \cite{SJHW11} is that the mean-field assumption (basically, that the site states are i.i.d.) 
 leads to a non-trivial relation between the distribution of a site state and the flux at the boundaries. 
 For the model in the present paper, such a mean-field assumption does not lead to any non-trivial results, even approximate.
 Indeed, it leads to two possible cases: the process living either on the first floor or on the second floor. The former case would imply that the steady-state flux is simply equal to the attempted arrival rate $\alpha$, which is obviously false. The latter case would imply, for example when $0< \alpha < 1/2$ and $\beta=1$, that
 the flux is $\alpha(1-\alpha)$, which is also false. 
 Thus, as stated in Section~\ref{sec-model-motivations}, the methods in previous work do not appear to lead to 
 non-trivial conclusions for our model.
 }

There are several interesting directions of future research into the multi-floor TASEP model, introduced in this paper. Among them: 
computing/estimating the values of the thresholds  $\alpha^*_i$ for $i\ge 2$; 
formal proof of Conjecture~\ref{conj-obbc};
a formal analysis  of the dynamics (both transient and steady-state) of the fronts (boundaries) between neighboring zones.

{\bf Data availability statement}

All data is included in the paper text.

\bibliographystyle{abbrv}


\end{document}